\newtheorem{lem}{Lemma}
\newtheorem{lemma}[lem]{Lemma}
\newtheorem{prop}{Proposition}
\newtheorem{proposition}[prop]{Proposition}
\newtheorem{thm}{Theorem}
\newtheorem{theorem}[thm]{Theorem}
\newtheorem{exam}{Example}
\newtheorem{example}[exam]{Example}
\newtheorem{prob}{Problem}
\newtheorem{problem}[prob]{Problem}
\def\\{\cr}
\def\({\left(}
\def\){\right)}
\def\[{\left[}
\def\]{\right]}
\def\<{\langle}
\def\>{\rangle}
\def\lcm{\mathrm{ lcm}}
\def\deg{\mathrm{deg}}
\def\N{{\mathbb N}}
\def\Z{{\mathbb Z}}
\def\Q{{\mathbb Q}}
\def\C{{\mathbb C}}
\begin{document}

\title{Coprime partitions and Jordan totient functions}
\author{\textbf{Daniela Bubboloni} \\
{\small {Dipartimento di Matematica e Informatica U.Dini} }\\
\vspace{-6mm}\\
{\small {Universit\`{a} degli Studi di Firenze} }\\
\vspace{-6mm}\\
{\small {viale Morgagni 67/a, 50134 Firenze, Italy}}\\
\vspace{-6mm}\\
{\small {e-mail: daniela.bubboloni@unifi.it}}\\
\vspace{-6mm}\\
{\small https://orcid.org/0000-0002-1639-9525} \and \textbf{Florian Luca}\footnote{Also affiliated with the Research Group in Algebraic Structures and Applications, King Abdulaziz University, Jeddah, Saudi Arabia and Centro de Ciencias Matem\'aticas UNAM, Morelia, Mexico}
\\
{\small {School of Mathematics} }\\
\vspace{-6mm}\\
{\small {University of the Witwatersrand} }\\
\vspace{-6mm}\\
{\small {1 Jan Smuts, Braamfontein 2000, Johannesburg, South Africa}}\\
\vspace{-6mm}\\
{\small {e-mail: Florian.Luca@wits.ac.za }}\\
\vspace{-6mm}\\
{\small https://orcid.org/0000-0003-1321-4422}\\
\vspace{-6mm}\\}

\date{\today}

\pagenumbering{arabic}

\maketitle
\begin{abstract} We show that while  the number of  coprime compositions of a positive integer $n$ into $k$ parts can be expressed as a $\Q$-linear combinations of the Jordan totient functions,  this is never possible for  the coprime partitions of $n$  into $k$ parts.  We also show that the number $p_k'(n)$ of coprime partitions of $n$ into $k$ parts can be expressed as a $\C$-linear combinations of the Jordan totient functions, for $n$ sufficiently large, if and only if $k\in \{2,3\}$ and in a unique way. Finally we introduce some generalizations of the Jordan totient functions and we show that $p_k'(n)$ can be always expressed as a $\C$-linear combinations of them.\end{abstract}
\noindent \textbf{Keywords:} coprime compositions; coprime partitions; generalized Jordan totient functions. \noindent

\noindent \textbf{MSC classification:} 05A17, 11P81, 11A25, 11N37.
\section{Introduction}\label{intro}
The study of partitions with a fixed number $k$ of parts satisfying some coprimality condition  \cite{BLS} has revealed to be very fruitful for analysing the normal covering number $\gamma(S_n)$ of the symmetric group $S_n$
 \cite{BPS}, that is, the smallest number of conjugacy classes of proper subgroups needed to cover $S_n$. If $\sigma\in S_n$ and $k$ is the number of orbits  of $\langle\sigma\rangle$ on $\{1,\dots,n\}$ then the unordered list $\mathfrak{p}(\sigma)=[x_1,\ldots,x_k]$ of the sizes $x_i$  of those orbits is a partition of $n$ into $k$ parts called the type of $\sigma.$ Now, by a basic result of group theory, two permutations are conjugate if and only if they have the same type.  Thus, the conjugates of some subgroups $H_1 , \ldots , H_s $ cover $S_n$ if and only if for every partition  $\mathfrak{p}$ of $n$ there exists $H_i$ containing at least a permutation of type  $\mathfrak{p}$.  We emphasize that the problem of determining the normal covering number of a finite group arises from Galois theory and is linked to the investigation of integer polynomials having a root modulo $p$, for every prime number $p$ (see~\cite[Section 1]{BBH}, ~\cite{BS} and ~\cite{RS}).

 Fortunately, in order to efficiently  bound $\gamma(S_n)$, it is not necessary to deal with partitions into $k$ parts for every possible $k\leq n$ and the focus is on $k=2,3,4$ (see \cite[Sections 5-6]{JCTA} and  \cite[Remark 1.2\,(c) and Sections 6-7]{BPS-19}). Recently, using knowledge about partitions into three parts  Bubboloni, Praeger and Spiga \cite[Theorem 1.1]{BPS-19} have shown that, for $n$ even, $\gamma(S_n)\ge
\frac{n}{2}\left(1-\sqrt{1- 4/\pi^2}\right)-\frac{\sqrt{17}}{2}n^{3/4}.$
Similar results about $S_n$ for $n$ odd are not known and the research could greatly benefit from knowing more about partitions into four parts, especially those satisfying suitable coprimality conditions. A point of force in this direction is the fact that the primitive subgroups of $S_n$ containing a permutation splitting into four cycles have been recently determined \cite{CPS}.
To start with, one should find an exact formula for the number $p_4'(n)$ of coprime partitions of $n$ into four parts. This initial and somewhat narrow  motivation inspired the present paper. 

Looking to the case $k=4$, we immediately realized that many considerations could be indeed carried on for every $k\geq 2$, shedding light on the number $p'_k(n)$ of coprime partitions of $n$ into $k$ parts. 
The idea relies on one hand, on representing those expressions as linear combinations of classic number theoretic functions and, on the other hand, on having a method which leads to an effective computation of $p_k(n)$ and $p'_k(n)$. This has appeared in the past in a number of papers concerning $p_k(n)$ but we did not see it for $p_k'(n)$.  In fact a formula for  $p'_k(n)$ seems to be of recent interest in the scientific community (see \cite[Question 2]{Man}).

Let $J_i$ denote the Jordan totient function of degree $i\geq 0$.
In \cite{bach}, it is proved that  $p'_3(n)=\frac{J_2(n)}{12}$ holds for $ n\geq 4$. It is also clear that $p'_2(n)=\frac{J_1(n)}{2}$ holds for $ n\geq 3.$ So, one can ask if similar results could hold for every $k.$ We show that those two situations are pure miracles, because $p_k'(n)$  is in fact a $\mathbb{C}$-linear combination of the Jordan totient functions for $n$ sufficiently large just in those two cases (Theorem \ref{comb-genera}). The feeling is that the class of  the Jordan totient functions is too restrictive and some generalizations of them are needed. We consider then three generalizations which are finely linked together:  the Jordan root totient function,  the Jordan modulo totient function and  the Jordan-Dirichlet totient functions (Section \ref{Jordan}). The first two generalizations seem not to be present in the literature. The third ones appeared in \cite{cyc} in order to investigate the values of the cyclotomic polynomial at the roots of unity and admit easy and manageable formulas. We show that $p_k'$ is a $\C$-linear combination of the Jordan root totient functions (Theorem \ref{comb-genera}). Relying on the partial fraction decomposition of the generating function of $p_k(n)$ and classical results about linearly recurrent sequences, we explicitly find the coefficients of such $\C$-linear combination and show how to deduce the expression of the Jordan root totient functions involved. To that last purpose the idea is to split a Jordan root totient function in a $\C$-linear combination of Jordan modulo totient functions, which in turn can be determined by suitable Jordan-Dirichlet totient functions, choosing some particular Dirichlet characters.  Our concrete approach is proposed in detail for $k\in\{2,3,4\}$.

We close noticing that the use  of generalizations of Jordan totient functions is present in the very recent research. For instance in \cite{M}, Moree et al. introduce the Jordan totient quotients of weight $w$ in order to study the average of the normalized derivative of cyclotomic polynomials.

\section{Basic facts}
\subsection{Notation}
We denote by $\mathbb{N}$ the set of positive integers and by $\mathbb{N}_0$ the set of non-negative integers. Let $n\in\N.$  We denote by $\Omega(n)$  the number of prime factors of $n$ counted with multiplicity and by $\omega(n)$  the number of distinct prime factors of $n$, where $\Omega(1)=\omega(1)=0$.
Moreover we define $\delta(n)=\lcm\{m\in  \mathbb{N}: m\le n \}$. As usual, $\phi$ denotes the Euler's totient function and $\mu$ the M\"obius function. For $k\in \N_0$, set $[k]=\{n\in \N : n\leq k\}$ and $[k]_0=\{n\in \N_0 : n\leq k\}$. In particular, $[0]=\varnothing$ while $[0]_0=\{0\}.$

Let $f:\mathbb N_0 \rightarrow {\mathbb C}$. Then $f$ is called an integer periodic function if 
$$M(f):=\{m'\in \N: \forall n,k\in \N_0,\, f(n+km')=f(n)\}\neq \varnothing.$$ 
The number $m:=\min M(f)$ is the period of $f$ and $M(f)=\{km: k\in \N\}$.

The function $f$ is called a  quasi--polynomial of degree $d\in \mathbb{N}_0$ if, for every $j\in [d]_0$, there exists an integer periodic function $f_j$ with period $m_j\in \mathbb N$ and $f_d$ not identically zero such that
  \begin{equation*}\label{q-p}
  f(n)=\sum_{j=0}^df_j(n)n^j\quad  \hbox{for all}\quad  n\in  \N_0.
  \end{equation*}
The minimum positive integer in $\bigcap_{j=0}^d M(f_j)$ equals
   $\lcm\{m_j:j\in [d]_0\}$ and is called the quasi--period of $f$.
Note that  the quasi-polynomials form a vectorial space over $\C$ which includes the integer periodic functions as well as the polynomials.

Given a sequence $(a_n)_{n\geq k}$ of complex numbers for some $k\in \N_0$, its generating function is the formal power series 
$$
\sum_{n\geq k}a_n z^n.
$$ 
With one exception (Proposition \ref{recurr}), in all instances we are treating in this paper, $a_n$ has polynomial growth. That is, $|a_n|=O(n^s)$ holds for all $n\ge k$ with some  $s\in \N$. 
 In the exceptional case $a_n$ has exponential growth; that is $|a_n|=\exp(O(n))$. Thus, the power series above has the radius of convergence at least 
$1$ in all cases except  the exceptional case for which the radius of convergence is positive. So, we think of it as an analytic function in some open disk.
 
 For $n\in {\mathbb N}$ we denote the group of $n$-roots of unity $U_n=\{z\in  {\mathbb C}: z^n=1 \}. $ It is well known that $U_n$ is cyclic with $\phi(n)$ generators called primitive  $n$-roots of unity. Among the primitive  $n$-roots of unity $e^{\frac{2\pi i}{n}}$ is called the principal  $n$-root of unity.
Every  $\omega\in U:=\bigcup_{n\in \N} U_n$ is called a root of unity. 
  If $P(X)\in{\mathbb C}[X]$, we denote its degree by $\mathrm{deg}(P).$

\subsection{ The Jordan totient functions and their generalizations}\label{sec:Jordan}

Throughout this section, let $k$ be a non-negative integer.
We first recall the basic properties of the {\it Jordan totient function} $J_k:\mathbb{N}\rightarrow \mathbb{N}_0$ of degree $k$. For every $n\in \mathbb{N}$, by definition, we have
\begin{equation}
\label{Jordan}
J_k(n):=\sum_{d\mid n}d^k \mu(n/d).
\end{equation}
Note that $J_k$ is a Dirichlet convolution of multiplicative functions, and thus it is a multiplicative function. Moreover,
\begin{equation}\label{Jord0}
J_0(n)=\sum_{d\mid n} \mu(n/d)=\left\{\begin{matrix} 1 & {\text{\rm if}} & n=1\\ 0 & {\text{\rm if}} & n>1,\end{matrix}\right.
\end{equation}
 is the neutral element with respect to the Dirichlet $*$-product of arithmetic functions.

The values of $J_k(n)$  for $k\geq 1$ can be easily computed in terms of the prime divisors of $n$ by the formula
\begin{equation*}\label{Jord-comp}
J_k(n)=n^k\prod_{p\mid n}\left(1-\frac{1}{p^k}\right),
\end{equation*}
which makes clear that
\begin{equation*}\label{Jord1}
J_1(n)=\sum_{d\mid n} d\mu(n/d)=\phi(n).
\end{equation*}

For the scope of our paper it is fundamental to consider some variations of the Jordan totient functions.

We define, for a root of unity $\omega$, the $\omega$-Jordan totient function of degree $k$ denoted $J_{k,\omega}:\mathbb{N}\rightarrow \mathbb{C}$
which associates to $n\in \mathbb{N}$ the complex number
\begin{equation}\label{Jord-root}
J_{(k,\omega)}(n):=\sum_{d\mid n}\omega^d d^k\mu(n/d).
\end{equation}
We call those functions the {\it Jordan root totient functions}\footnote{The definition \eqref{Jord-root} can be given for a generic $\omega \in \mathbb{C}$, but that has no interest for the scope of the present paper.}.
Note these are generalisations of the Jordan totient functions because  $J_{(k,1)}=J_{k}$. However, those functions are not multiplicative when $\omega\neq1.$

We define next, for every $m\in \N$ and $j\in [m-1]_0$, the {\it Jordan modulo totient functions} of degree $k$ denoted $J_{k}^{j, m}:\mathbb{N}\rightarrow \mathbb{C}$ which associates to $n\in \mathbb{N}$ the integer
\begin{equation*}\label{Jord-mod}
J_{k}^{j, m}(n):=\sum_{\substack{d\mid n\\ d\equiv j\pmod m}} d^k\mu(n/d).
\end{equation*}

 Note that those functions cannot be interpreted as convolutions of multiplicative functions because the sum is not extended to all the divisors of $n$. In particular, they are not multiplicative in general.  Since $J_{k}^{0, 1}=J_k$ the  Jordan modulo totient functions are generalizations of the  Jordan totient functions as well.

 It is immediately observed that  the Jordan root totient functions are $\C$-linear combinations of the Jordan modulo totient functions. More precisely, consider $J_{(k,\omega)}$ for some $\omega\in U $ and some $k\in \N_0$.
 Let  $m$ be the minimum positive integer such that $\omega\in U_m.$ Then, for every $n\geq 1$, we have
 \begin{equation}\label{root-modulo}
 J_{(k,\omega)}(n)=\sum_{j=0}^{m-1}\omega^j\sum_{\substack{d\mid n\\ d\equiv j\pmod m}} d^k\mu(n/d)=\sum_{j=0}^{m-1}\omega^jJ_{k}^{j, m}(n).
 \end{equation}
 Thus, $J_{(k,\omega)}=\sum_{j=0}^{m-1}\omega^jJ_{k}^{j, m}.$

We finally recall a definition from \cite{cyc}.
For a Dirichlet character $\chi$, the function  $J_k(\chi;\cdot):\mathbb{N}\rightarrow \mathbb{C}$ is defined by associating to every $n\in \mathbb{N}$ the complex number
 \begin{equation*}\label{Jord-Dir}
 J_k(\chi;n) :=\sum_{ d\mid n}\chi(d)d^k\mu(n/d).
\end{equation*}
We call those functions the {\it Jordan-Dirichlet totient functions}.
Since $\chi$ is totally multiplicative, the function $ J_k(\chi;\cdot)$ is a Dirichlet convolution of multiplicative functions, and thus it is a multiplicative function. Note that if ${\bf 1}$ is the unique Dirichlet character modulo $1$ (called the  trivial character), that is the  function ${\bf 1}(x)=1$ for every $x\in \Z$, we have that $ J_k({\bf 1};\cdot)= J_k.$ 
Thus, the functions $ J_k(\chi;\cdot)$ are generalizations of the Jordan totient function $J_k.$
The values  $ J_k(\chi;n)$ can be explicitly computed when $\chi$ is assigned (see, for example, \cite[Lemma 6]{cyc}). Moreover, the Jordan-Dirichlet totient functions are  $\C$-linear combinations of the Jordan modulo totient functions.
\begin{lemma}
\label{char}
Let  $k$ be a non-negative integer and $\chi$ be a Dirichlet character modulo $m$ for $m$ a positive integer. Write $n\in \N$ as  $\displaystyle{n=\prod_{\substack{p^{c_p}\| n\\ c_p\geq 1,\, p\  \mathrm{prime}}} p^{c_p}}$. Then

\begin{itemize}
\item[$(i)$]
 \begin{equation*}\label{Jord-Dir-formula1}
  J_k(\chi;n) =n^k\prod_{\substack{p| n\\ p\  \mathrm{prime}}}\chi(p)^{c_p-1}\left(\chi(p)-\frac{1}{p^k}\right).
\end{equation*}
\item[$(ii)$] If $(n,m)=1$, then
 \begin{equation}\label{Jord-Dir-formula}
  J_k(\chi;n) =n^k\chi(n)\prod_{\substack{p| n\\ p\  \mathrm{prime}}}\left(1-\frac{1}{\chi(p)p^k}\right).
\end{equation}
\item[$(iii)$]
\begin{equation}\label{Jord-Dir-modulo}
  J_k(\chi;\cdot) =\sum_{j=0}^{m}\chi(j)J_{k}^{j, m}.
\end{equation}
\end{itemize}
\end{lemma}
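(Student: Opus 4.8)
The plan is to establish $(i)$ first by multiplicativity together with a prime-power computation, and then to read off $(ii)$ and $(iii)$ as formal consequences.

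For $(i)$, I would use the fact, already noted in the excerpt, that $J_k(\chi;\cdot)$ is multiplicative, being the Dirichlet convolution of the totally multiplicative function $d\mapsto\chi(d)d^k$ with the multiplicative function $\mu$. It therefore suffices to evaluate it on a prime power $p^c$ with $c\ge 1$ and multiply over the prime-power factors $p^{c_p}\|n$. On $p^c$ only the two largest divisors contribute, since $\mu(p^c/d)=0$ unless $p^c/d\in\{1,p\}$; this leaves $J_k(\chi;p^c)=\chi(p)^cp^{ck}-\chi(p)^{c-1}p^{(c-1)k}$, which I would factor as $(p^c)^k\chi(p)^{c-1}(\chi(p)-p^{-k})$. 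Taking the product over $p\mid n$ and pulling out $\prod_p(p^{c_p})^k=n^k$ gives exactly the formula in $(i)$.

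For $(ii)$, the hypothesis $(n,m)=1$ forces $(p,m)=1$ for every prime $p\mid n$, so each $\chi(p)$ is a nonzero root of unity and hence invertible. I would then rewrite the generic factor as $\chi(p)^{c_p-1}(\chi(p)-p^{-k})=\chi(p)^{c_p}\bigl(1-\tfrac{1}{\chi(p)p^k}\bigr)$, split the product into $\prod_{p\mid n}\chi(p)^{c_p}$ times $\prod_{p\mid n}\bigl(1-\tfrac{1}{\chi(p)p^k}\bigr)$, and recognize the first product as $\chi\bigl(\prod_p p^{c_p}\bigr)=\chi(n)$ by total multiplicativity. This converts $(i)$ into $(ii)$.

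For $(iii)$, I would substitute the definition of $J_k^{j,m}$ into the right-hand side and interchange the order of summation. Each divisor $d$ of $n$ lies in a single residue class $j\equiv d\pmod m$, and periodicity of $\chi$ gives $\chi(j)=\chi(d)$, so the double sum collapses to $\sum_{d\mid n}\chi(d)d^k\mu(n/d)=J_k(\chi;n)$. The one point needing a word is the index range: the class $j\equiv 0$ (i.e.\ $j=0$ or $j=m$) contributes nothing because $\chi(0)=0$, so it is harmless whether the outer sum runs to $m-1$ or to $m$. The computation is elementary throughout, and the only step that demands genuine care is the prime-power evaluation in $(i)$: one must be sure that the M\"obius weight kills every divisor of $p^c$ except $p^c$ and $p^{c-1}$. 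Once that truncation is correctly identified, both $(ii)$ and $(iii)$ reduce to routine algebraic rearrangement.
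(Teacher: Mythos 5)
Your proposal is correct and follows essentially the same route as the paper: the prime-power evaluation via the M\"obius truncation to $d\in\{p^{c},p^{c-1}\}$ combined with multiplicativity for $(i)$, the factor $\chi(p)^{c_p}\bigl(1-\tfrac{1}{\chi(p)p^k}\bigr)$ and total multiplicativity for $(ii)$, and grouping divisors by residue class modulo $m$ using periodicity of $\chi$ for $(iii)$. Your remark that the $j\equiv 0$ class is annihilated by $\chi(0)=0$ is a harmless extra observation the paper handles implicitly by summing $j$ from $1$ to $m$.
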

\begin{proof} $(i)$ Using that $\chi$ is totally multiplicative, we have
\begin{eqnarray*}\label{prime}
  J_k(\chi;p^{c_p}) &=&\sum_{ d\mid p^{c_p}}\chi(d)d^k\mu(p^{c_p}/d)=-\chi(p^{c_p-1})p^{k(c_p-1)}+\chi(p^{c_p})p^{kc_p}\\
  &=& -\chi(p)^{c_p-1}p^{k(c_p-1)}+\chi(p)^{c_p}p^{kc_p}=\chi(p)^{c_p-1}p^{kc_p}\left(\chi(p)-\frac{1}{p^k} \right).
 \end{eqnarray*}
 Hence, by the multiplicativity of $ J_k(\chi;\cdot)$, we obtain
 \begin{eqnarray*}\label{general}
  J_k(\chi;n) &=&\prod_{\substack{p| n\\ p\  \mathrm{prime}}} J_k(\chi;p^{c_p})=\prod_{\substack{p| n\\ p\  \mathrm{prime}}}\chi(p)^{c_p-1}p^{kc_p}\left(\chi(p)-\frac{1}{p^k} \right)\\
  &=&n^k\prod_{\substack{p| n\\ p\  \mathrm{prime}}}\chi(p)^{c_p-1}\left(\chi(p)-\frac{1}{p^k}\right).
 \end{eqnarray*}

  $(ii)$ Since $(n,m)=1$ we have that,  for every prime $p$ dividing $n$, $\chi(p)\neq 0$ holds. Thus, the result
   follows immediately by $(i)$ using again that $\chi$ is totally multiplicative.

  $(iii)$ Since the Dirichlet characters modulo $m$ are periodic of period $m$, we have
  \begin{eqnarray*}
   J_k(\chi;n)& =&\sum_{d\mid n}\chi(d)d^k\mu(n/d)=\sum_{j=1}^m\sum_{\substack{d\mid n\\ d\equiv j\pmod m}}\chi(j) d^k\mu(n/d)\\
   &=&\sum_{j=1}^{m}\chi(j)J_{k}^{j, m}(n).
  \end{eqnarray*}
\end{proof}
We now briefly discuss how it possible to express the Jordan modulo totient functions by the Jordan-Dirichlet totient functions.

Recall that there are exactly $\phi(m)$ Dirichlet characters modulo $m$ so that, once $m$ is fixed, the equalities in \eqref{Jord-Dir-modulo} give $\phi(m)$ independent linear equations in the $m$ variables $ J_{k}^{j, m}$ for $j\in[m-1]_0$ with vanishing coefficient for the $j$ such that $\gcd(j,m)>1$. From those  equations  one can easily find the expression for $ J_{k}^{j, m}$, with $\gcd(j,m)=1$, in terms of the $J_k(\chi;\cdot).$  In fact, by the orthogonality relations for characters, we have
\begin{equation}
\label{eq:Jkjm}
J_k^{j,m}=\frac{1}{\phi(m)} \sum_{\chi} {\overline{\chi}}(j)J_k(\chi;\cdot)\qquad \hbox{for}\  (j,m)=1.
\end{equation}

The computation of  $J_k^{j,m}$ when $s:=\gcd(j,m)>1$ reduces to that of $J_k^{\,j/s,\,m/s}$ which, since $j/s$ and $m/s$ are coprime, is carried out through formula \eqref{eq:Jkjm}. More precisely we have
\begin{equation*}
\label{eq:Jkjm2}
J_k^{j,m}(n)=\left\{\begin{matrix} 0 & {\text{\rm if}} & s\nmid n\\ 
s^kJ_k^{\,j/s,\,m/s}(n/s) & {\text{\rm if}} & s\mid n.\end{matrix}\right.
\end{equation*}
Indeed, let  $j'=j/s$ and $m'=m/s$. 
 If $d$ is a positive integer such that $d\mid n$ and $d\equiv j\pmod m$, then we 
 have 
 \begin{equation}\label{d}
 d=sj'+ksm'=s(j'+km')\mid n
 \end{equation}
 for some $k\in \mathbb{N}_0$. In particular, if at least one such $d$ exists then $s\mid n$. Hence, if $s\nmid n$ then we have $J_k^{j,m}(n)=0.$ Assume now that $s\mid n$ and let $n'=n/s$. By \eqref{d}, it  follows immediately that 
 $$\{d\in \mathbb{N}: d\mid n,\, d\equiv j\pmod m\}=\{sd' \in \mathbb{N}:  d'\mid n',\, d'\equiv j'\pmod{ m'}\}.$$ Then
 $$J_k^{j,m}(n)=\sum_{\substack{d'\mid n'\\ d'\equiv j'\pmod{ m'}}} (sd')^k\mu(n'/d')=s^kJ_k^{j',m'}(n').$$

 \subsection{Compositions and partitions}
Let $k\in \mathbb{N}$. A $k$-{\it composition} of $n\in\mathbb{N}$ is an ordered $k$-tuple $x=(x_1,\dots,x_k)$ where, for every
$j\in [k]$, $x_j\in\mathbb{N}$ and $\sum_{j=1}^{k}x_j=n.$ 
Let $c_k(n)$ be the number of $k$-compositions of $n$.
Then $c_k(n)=0$ for all $n<k$ and it is well known that, for every $n\geq k$, we have
\begin{equation}\label{compositions}
c_k(n)=\binom{n-1}{k-1}=\frac{(n-1)\cdots (n-k+1)}{(k-1)!}.
\end{equation}
Consider now the corresponding polynomial
\begin{equation*}\label{pol-compositions}
C_k(X):=\frac{(X-1)\cdots (X-k+1)}{(k-1)!}=\sum_{i=0}^{k-1}a_{ki}X^i\in \Q[X],
\end{equation*}
and note that
 $c_k(n)=C_k(n)$ holds, not only for $n\geq k$ but for all $n\geq 1$ because any positive integer less than $k$ is a root of $C_k(X)$.
Thus,
 \begin{equation}\label{cn-pol}
c_k(n)=\sum_{i=0}^{k-1}a_{k,i}n^i\quad {\rm for ~all}\quad  n\geq 1.
\end{equation}

 We call $C_k(X)$ the $k$-composition polynomial. Recalling (\cite[Definition 8.1]{Char})  that the Stirling numbers of the first kind $s(k,i)$ are given for  $1\leq i\leq k$ by
 $$X(X-1)\cdots (X-k+1)=\sum_{i=1}^ks(k,i)X^i,$$
 it immediately follows that for every $i\in [k-1]_0$ we have
 \begin{equation}\label{Stir}
 a_{ki}=\frac{s(k,i+1)}{(k-1)!}.
 \end{equation}
In particular, $a_{k,k-1}=\frac{1}{(k-1)!}$ so that
\begin{equation*}\label{compositions-asy}
c_k(n)=\frac{1}{(k-1)!}n^{k-1}+O(n^{k-2}),
\end{equation*}
with the implied constant in the Landau symbol depending on $k.$

The generating function of $c_k(n)$ is well known (\cite[Example I.6]{AC}) and given by
\begin{equation*}\label{gen-compositions}
\sum_{n\geq 1}c_k(n)z^n=\frac{z^k}{(1-z)^{k}}.
\end{equation*}
The above equality can be obviously rewritten in terms of the $k$-composition polynomial as
\begin{equation}\label{gen-compositions-pol}
\sum_{n\geq 1}C_k(n)z^n=\frac{z^k}{(1-z)^{k}}. 
\end{equation}
Since in the above sums the first $k-1$ terms are zero we deduce that
\begin{equation}\label{lemma3}
\frac{1}{(1-z)^{k}}=\sum_{n\geq0}\binom{n+k-1}{k-1}z^n=\sum_{n\geq0}C_k(n+k)z^n.
\end{equation}

A $k$-{\it partition} of $n\in\mathbb{N}$ is an unordered $k$-tuple $x=[x_1,\dots,x_k]$ where, for every
$j\in [k]$, $x_j\in\mathbb{N}$ and $n=\sum_{j=1}^{k}x_j.$ Both for compositions and for partitions $x$,
the numbers $x_1,\ldots, x_k$ are called the terms of $x$. 
Let  $ p_k(n)$ be the number of $k$-partitions of $n$. 
Again we have $p_k(n)=0$ for all $n<k$. The formulas for $p_k(n)$ for $k=2,3$ are known (see, for example, \cite[page~$81$]{Andrews}). 
The generating function of $p_k(n)$ is also well-known and given by
 (\cite[page~$45$]{AC})
\begin{equation}\label{gen-part}
\sum_{n\ge 1} p_k(n)z^n=\frac{z^k}{(1-z)(1-z^2)\cdots (1-z^k)}.
\end{equation}

 Partitions and compositions are strictly linked and in many occasions one deduces formulas from the ones starting from those for the other one. But dealing with partitions is considerably harder than dealing with compositions and formulas become more complicate.

A $k$-composition (a $k$-partition) of $n$ is called {\it coprime} provided that $\gcd(x_1,\dots,x_k)=1$ or, equivalently,  if $\gcd(x_1,\dots,x_k,n)=1.$
 We denote with $c'_k(n)$ and with  $p'_k(n)$ the number of coprime $k$-compositions and $k$-partitions of $n$ respectively. It is easy to check that
 $
 c_k(n)=\sum_{d\mid n} c_k'(n/d),
 $
 as well as
 $
 p_k(n)=\sum_{d\mid n} p_k'(n/d).
 $
 Hence, by M\"obius inversion, we also have
 \begin{equation}
 \label{Mob-comp}
c'_k(n)=\sum_{d\mid n} \mu(n/d)\,c_k(d),
\end{equation}
 and
\begin{equation}\label{Mob-part}
 p'_k(n)=\sum_{d\mid n} \mu(n/d)\,p_k(d).
\end{equation}

 \section{Coprime $k$-compositions and asymptotics}\label{cop-comp}
Since it is well known that
 $$J_k(n)=|\{(x_1,\dots,x_k)\in \mathbb{N}^k: \forall i\in [k], \ 1\leq x_i\leq n,\  \gcd(x_1,\dots,x_k,n)=1\}|,$$
the role of the Jordan totient functions in describing the number of coprime compositions or partitions  is reasonably expected. For instance, in  \cite[Theorem 1.1, Theorem 2.2]{bach} it is shown that
\begin{equation}\label{k2N}
p'_2(n)=\frac{J_1(n)}{2}\quad {\rm for~all}\quad n\geq 3,
\end{equation}
 and
 \begin{equation}\label{k3N}
 p'_3(n)=\frac{J_2(n)}{12} \quad {\rm for~all}\quad n\geq 4.
 \end{equation}

In the following proposition we describe the easy case of  compositions and determine the asymptotic behavior of both  coprime compositions and partitions. We stress that part $(i)$ and $(ii)$ are not a novelty. For instance they appear in \cite[page 2]{To}. We reprove briefly them, for completeness.

\begin{proposition}\label{cop-not} Let $k\in \mathbb{N}$ and $a_{ki}$ as in \eqref{Stir}, for $i\in [k-1]_0$. Then the following facts hold:
\begin{itemize}
\item[(i)]For every $n\geq 1$, we have $c'_k(n)=\sum_{i=0}^{k-1}a_{ki}\,J_{i}(n).$
In particular, $c'_k(n)$ is a $\Q$-linear combination of the Jordan totient functions.
\item[(ii)] For $k\geq 2$, we have
$$c'_k(n)=\frac{1}{(k-1)!}J_{k-1}(n)+O(n^{k-2}).$$
\item[(iii)] For $k\geq 2$, we have
\begin{equation}\label{as-part}
p_k(n)=\frac{1}{k! (k-1)!} n^{k-1} +O(n^{k-2})
\end{equation}
and
$$p'_k(n)=\frac{1}{k!(k-1)!}J_{k-1}(n)+O(n^{k-2}).$$
\end{itemize}
In all the above formulas all the implied constants in the Landau symbols depend on $k.$
\end{proposition}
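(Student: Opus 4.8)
The plan is to dispatch (i) and (ii) by Dirichlet-convolution bookkeeping, and then to reduce the two partition statements in (iii) to the composition statements via a combinatorial estimate showing that $k$-partitions with a repeated part are negligible.

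For (i) I would start from the Möbius-inverted identity \eqref{Mob-comp}, namely $c'_k(n)=\sum_{d\mid n}\mu(n/d)\,c_k(d)$, and substitute the polynomial expression \eqref{cn-pol}, valid for every $d\ge 1$. Interchanging the two finite sums gives
\begin{equation*}
c'_k(n)=\sum_{i=0}^{k-1}a_{ki}\sum_{d\mid n}\mu(n/d)\,d^i=\sum_{i=0}^{k-1}a_{ki}\,J_i(n),
\end{equation*}
since the inner sum is exactly $J_i(n)$ by the definition \eqref{Jordan}; as the $a_{ki}$ are rational by \eqref{Stir} (Stirling numbers of the first kind being integers), this is a $\Q$-linear combination. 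For (ii) I would isolate the top term: \eqref{Stir} gives $a_{k,k-1}=1/(k-1)!$, while the product formula for $J_i$ yields the trivial bound $J_i(n)\le n^i$, so each remaining term with $i\le k-2$ is $O(n^{k-2})$; summing the finitely many of them gives $c'_k(n)=\tfrac{1}{(k-1)!}J_{k-1}(n)+O(n^{k-2})$.

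The heart of (iii) is the bridge $c_k(n)=k!\,p_k(n)+O(n^{k-2})$ together with its coprime analogue $c'_k(n)=k!\,p'_k(n)+O(n^{k-2})$. The key lemma I would establish is that the number of $k$-partitions of $n$ with at least two equal parts is $O(n^{k-2})$: for $k=2$ there is at most one such partition, and for $k\ge 3$, fixing a repeated value $v$ leaves a $(k-2)$-partition of $n-2v$, so the count is at most $\sum_{v\ge 1}p_{k-2}(n-2v)\le\sum_{m=0}^{n}p_{k-2}(m)$, which is $O(n^{k-2})$ because $p_{k-2}(m)\le c_{k-2}(m)=\binom{m-1}{k-3}=O(m^{k-3})$ by \eqref{compositions}. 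Granting this, each $k$-partition with distinct parts contributes exactly $k!$ compositions while each of the $O(n^{k-2})$ partitions with a repeated part contributes at most $k!$, whence $c_k(n)=k!\,p_k(n)+O(n^{k-2})$. The identical argument, restricted to the coprime sub-collections (coprimality is a property of the multiset of parts and hence is shared by all reorderings), yields $c'_k(n)=k!\,p'_k(n)+O(n^{k-2})$.

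Combining these, the first asymptotic of (iii) follows by dividing the known expansion $c_k(n)=\tfrac{1}{(k-1)!}n^{k-1}+O(n^{k-2})$ by $k!$, and the second follows from part (ii): $p'_k(n)=\tfrac{1}{k!}c'_k(n)+O(n^{k-2})=\tfrac{1}{k!(k-1)!}J_{k-1}(n)+O(n^{k-2})$. The main obstacle, and the only step that is not formal manipulation, is the estimate that partitions with a repeated part are negligible; routing the coprime case through part (ii) rather than through a direct Möbius inversion of the $p_k$ expansion also sidesteps the need to control divisor sums such as $\sigma_{k-2}(n)$ for small $k$. I note that one could instead extract the leading term of $p_k(n)$ from its generating function \eqref{gen-part} by a partial-fraction analysis, but the counting argument above is shorter and feeds directly into the coprime statement.
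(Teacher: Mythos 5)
Your parts (i) and (ii) coincide with the paper's proof: the same interchange of the finite sums in \eqref{Mob-comp} with \eqref{cn-pol} and \eqref{Jordan}, and the same isolation of the top coefficient $a_{k,k-1}=1/(k-1)!$ from \eqref{Stir}. Part (iii) is where you genuinely diverge, and your route is correct. The paper does not prove the asymptotic \eqref{as-part} at all; it cites it (formula (4.3) of the Ram\'{\i}rez Alfons\'{\i}n reference, going back to Schur and Nathanson), and then obtains the coprime statement by a case split: for $k\in\{2,3\}$ it invokes the exact formulas \eqref{k2N} and \eqref{k3N}, while for $k\ge 4$ it M\"obius-inverts \eqref{as-part} via \eqref{Mob-part} and bounds the error by $n^{k-2}\sum_{d\ge 1}d^{-(k-2)}$ --- a series that converges only when $k\ge 4$, which is precisely why the small cases must be handled separately. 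Your argument replaces both steps: the lemma that $k$-partitions with a repeated part number $O(n^{k-2})$ (which checks out, including the edge cases $k=2,3$ where $p_{k-2}$ degenerates) gives a self-contained proof of \eqref{as-part} from \eqref{compositions}, and passing to the coprime sub-collections gives $c'_k(n)=k!\,p'_k(n)+O(n^{k-2})$ uniformly in $k\ge 2$, so that the conclusion follows from part (ii) with no case split and no divergent divisor sum to worry about. What you lose is only that the paper's citation carries the historical attribution; what you gain is a shorter, uniform, and fully self-contained treatment of (iii). Both are valid.
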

\begin{proof}
$(i)$ By \eqref{cn-pol},  for every $n\geq 1$, we have  $c_k(n)=\sum_{i=0}^{k-1}a_{ki}n^i$. Hence, recalling the definition \eqref{Jordan} and using \eqref{Mob-comp}, we get
$$c'_k(n)= \sum_{d\mid n} \mu(n/d)\,c_k(d)=\sum_{d\mid n} \mu(n/d)\,\sum_{i=0}^{k-1}a_{ki}d^i=\sum_{i=0}^{k-1}a_{ki}J_i(n).$$

$ (ii)$ It follows immediately by $(i)$  and by \eqref{Stir}, since  ${\displaystyle{a_{k,k-1}=\frac{1}{(k-1)!}.}}$

For $(iii)$, estimate \eqref{as-part} is formula (4.3) in \cite{JLRA}. Without the estimate of the error term it is attributed to the 1926 paper of Schur \cite{Schur}. With the error term, it is attributed to Nathanson \cite{Nat}.  The estimate of $p_k(n)'$ for $k=2,3$ comes immediately from \eqref{k2N} and \eqref{k3N}. 
 For $k\ge 4$ it follows instead from \eqref{as-part} and \eqref{Mob-part} since in this case 
\begin{eqnarray*}
p_k'(n) & = & \sum_{d\mid n} \mu(d)p_k(n/d)=\frac{1}{k!(k-1)!}\sum_{d\mid n} \mu(d)(n/d)^{k-1}+O\left(\sum_{d\mid n} \left(\frac{n}{d}\right)^{k-2}\right)\\
& = & \frac{J_{k-1}(n)}{k!(k-1)!} J_{k-1}(n)+O\left(n^{k-2}\sum_{d\ge 1} \frac{1}{d^{k-2}}\right)\\
& = & \frac{1}{k!(k-1)!}J_{k-1}(n)+O(n^{k-2}).
\end{eqnarray*}
\end{proof}
The above proposition gives, among other things, an easy formula for calculating $c'_k(n)$ in terms of the prime divisors of $n$. For instance, by \eqref{compositions}, we have
\begin{equation*}\label{coco34} c_2(n)=n-1,\qquad c_3(n)=\frac{n^2-3n+2}{2}\quad \hbox{and}\quad  c_4(n)=\frac{n^3-6n^2+11n-6}{6}.
\end{equation*}
Thus from Proposition \ref{cop-not}\,$(i)$, we get for every $n\geq 2$,
\begin{equation*}\label{co2}
c'_2(n)=J_1(n),
\end{equation*}
\begin{eqnarray*}\label{co3}
c'_3(n) & = & \frac{1}{2}J_2(n)- \frac{3}{2}J_1(n)=\frac{1}{2} n^2\prod_{p\mid n} \left(1-\frac{1}{p^{2}}\right)-\frac{3}{2}n\prod_{p\mid n} \left(1-\frac{1}{p}\right),
\end{eqnarray*}
and
\begin{eqnarray*}\label{co4}
c'_4(n)&=&\frac{1}{6}J_3(n)- J_2(n)+\frac{11}{6}J_1(n)\nonumber\\
& = & \frac{1}{6}n^3\prod_{p\mid n} \left(1-\frac{1}{p^{3}}\right)-n^2\prod_{p\mid n} \left(1-\frac{1}{p^{2}}\right)+\frac{11}{6}n\prod_{p\mid n} \left(1-\frac{1}{p}\right).
\end{eqnarray*}
One can wonder if similar easy formulas could hold for partitions too, just adapting the coefficients of the Jordan totient functions.
 Formulas \eqref{k2N} and \eqref{k3N}  seem encouraging in this direction. However, in  \cite{bach}  it is observed that the situation becomes very complicated  for $k\geq 4$ and no
 information is given for the general approach. Our paper aims to explain in which sense  and why complications do arise.

Note that, since $p'(2)\neq \frac{J_1(2)}{2}$ as well as $p'_3(3)\neq \frac{J_2(3)}{12}$  the limitations on $n$ in \eqref{k2N} and in \eqref{k3N} cannot be eliminated but, in principle, one cannot exclude that the small cases for $n$ could be included in a more rich formula involving as terms other Jordan totient functions.

Inspired by \eqref{k2N}-\eqref{k3N}, we then consider four problems:

\begin{problem}\label{pb3} Determine the $k\geq 2$ such that $p'_k(n)$ is a $\C$-linear combination of the Jordan root totient functions  in the entire domain $n\geq 1$.
\end{problem}

\begin{problem}\label{pb4} Determine the $k\geq 2$ such that $p'_k(n)$ is a $\C$-linear combination of the Jordan modulo totient functions  in the entire domain $n\geq 1$.
\end{problem}
\begin{problem}\label{pb1} Determine the $k\geq 2$ such that $p'_k(n)$ is a $\C$-linear combination of the Jordan totient functions in the entire domain $n\geq 1$.
\end{problem}
\begin{problem}\label{pb2} Determine the $k\geq 2$ such that $p'_k(n)$ is a $\C$-linear combination of the Jordan totient functions in a domain $n\geq N_k$ for some suitable $N_k\in\N$ depending on $k$.
\end{problem}

\section{Sequences with rational generating functions}\label{final}

The next classical result is called the Binet formula for linear recurrences. See, for example, Theorem C.1 in \cite{ST}. The same contents appear also, with some minor further details, in \cite[Sections 4.1--4.4]{Stan}.
\begin{proposition}
\label{recurr} Let  $P(X)=\prod_{j=1}^s (1-\alpha_jX)^{b_j}\in {\mathbb C}[X]$ be a polynomial with $P(0)=1$, and distinct nonzero roots $ \alpha_1^{-1},\dots, \alpha_s^{-1}\in \C^*$,  $s\geq 1$, 
of multiplicities $b_1,\ldots,b_s$, respectively. Given $Q(X)\in {\mathbb C}[X]$ of degree smaller than ${\text{\rm deg}}(P)$ write the Taylor expansion of $Q(z)/P(z)$ as 
\begin{equation}
\label{eq:Q}
\frac{Q(z)}{P(z)}=\sum_{n\geq 0}a_n z^n\qquad {\text{for}}\quad |z|<\max_{1\le j\le s}\{|\alpha_j|\}.
\end{equation}
Then, for every $j\in [s]$, there exists uniquely determined $P_j(X)\in  \C[X]$ of degree at most $b_j-1$ such that 
\begin{equation}
\label{eq:an}
a_n=\sum_{j=1}^sP_j(n)\alpha_j^n \quad \hbox{ for all }\ n\geq 0.
\end{equation}
If $\gcd(P(X),Q(X))=1$, then $P_j(X)$ has degree exactly $b_j-1$ for $j\in [s]$. 
Conversely, if $P_j(X)\in {\mathbb C}[X]$ are polynomials of degree at most $b_j-1$ and $a_n$ is given by formula \eqref{eq:an} then formula \eqref{eq:Q} holds with 
some polynomial $Q(X)\in {\mathbb C}[X]$  of degree less than ${\text{\rm deg}}(P)$.
\end{proposition}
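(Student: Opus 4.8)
The plan is to reduce everything to the partial fraction decomposition of the proper rational function $Q(z)/P(z)$ together with the single generating-function identity \eqref{lemma3}. Since $\deg(Q)<\deg(P)=\sum_{j=1}^s b_j$ and $P(z)=\prod_{j=1}^s(1-\alpha_j z)^{b_j}$ has the distinct nonzero poles $\alpha_1^{-1},\dots,\alpha_s^{-1}$, standard partial fractions over $\C$ yield unique constants $c_{j,i}\in\C$ with
$$\frac{Q(z)}{P(z)}=\sum_{j=1}^s\sum_{i=1}^{b_j}\frac{c_{j,i}}{(1-\alpha_j z)^i}.$$
Every assertion of the proposition will then follow by comparing this with a closed form for the Taylor coefficients of each summand.

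First I would expand each elementary fraction. Replacing $z$ by $\alpha_j z$ in \eqref{lemma3} gives $(1-\alpha_j z)^{-i}=\sum_{n\ge 0}\binom{n+i-1}{i-1}\alpha_j^n z^n$, and the coefficient $\binom{n+i-1}{i-1}=\frac{(n+i-1)\cdots(n+1)}{(i-1)!}$ is a polynomial in $n$ of degree $i-1$ with leading coefficient $1/(i-1)!$. Collecting the coefficient of $z^n$ in the decomposition and grouping by the pole $\alpha_j$ yields $a_n=\sum_{j=1}^s P_j(n)\alpha_j^n$ with
$$P_j(X):=\sum_{i=1}^{b_j}c_{j,i}\binom{X+i-1}{i-1}\in\C[X],\qquad \deg(P_j)\le b_j-1,$$
which proves the existence half of \eqref{eq:an}.

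For uniqueness I would note that the polynomials $\binom{X+i-1}{i-1}$, $i=1,\dots,b_j$, have pairwise distinct degrees $0,1,\dots,b_j-1$ and hence form a basis of the space of polynomials of degree at most $b_j-1$; thus the passage $(c_{j,i})_i\leftrightarrow P_j$ is an invertible linear change of coordinates, and since the $c_{j,i}$ are determined uniquely by the partial fraction decomposition, so are the $P_j$. For the degree-exactness claim I would identify the leading coefficient of $P_j$, which from the displayed formula equals $c_{j,b_j}/(b_j-1)!$; hence it suffices to show $c_{j,b_j}\ne0$ when $\gcd(P,Q)=1$. Multiplying the decomposition by $(1-\alpha_j z)^{b_j}$ and letting $z\to\alpha_j^{-1}$ gives
$$c_{j,b_j}=\frac{Q(\alpha_j^{-1})}{\prod_{l\ne j}(1-\alpha_l\alpha_j^{-1})^{b_l}},$$
whose denominator is nonzero because the $\alpha_l$ are distinct; so $c_{j,b_j}\ne0$ exactly when $Q(\alpha_j^{-1})\ne0$, which is guaranteed since $\gcd(P,Q)=1$ forces $Q$ not to vanish at the roots $\alpha_j^{-1}$ of $P$. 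This gives $\deg(P_j)=b_j-1$ for all $j$.

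Finally, the converse runs the same computation backwards: given $P_j$ of degree at most $b_j-1$, expand each $P_j$ in the basis $\{\binom{X+i-1}{i-1}\}_{i=1}^{b_j}$ to recover constants $c_{j,i}$, whence $\sum_{n\ge0}P_j(n)\alpha_j^n z^n=\sum_{i=1}^{b_j}c_{j,i}(1-\alpha_j z)^{-i}$; summing over $j$ and clearing the common denominator $P(z)$ produces $Q(z)/P(z)$ with $\deg(Q)<\deg(P)$. I expect no genuine obstacle here: the only points requiring care are the bookkeeping that keeps $\deg(Q)<\deg(P)$ in the converse and the correct identification of the leading coefficient $c_{j,b_j}$ as a residue in the degree-exactness step, while the convergence of the series is exactly the radius-of-convergence remark already recorded in the statement.
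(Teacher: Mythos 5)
Your proof is correct, and it is essentially the argument the paper itself relies on: the paper cites the result to \cite{ST} and \cite{Stan}, but the computation it presents immediately after the proposition is exactly your partial-fraction decomposition into terms $(1-\alpha_j z)^{-i}$, expanded via \eqref{lemma3} (your $\binom{n+i-1}{i-1}$ is the paper's $C_i(n+i)$), including the same residue observation that the top coefficient at each pole is nonzero when $\gcd(P,Q)=1$. No gaps worth flagging; your uniqueness argument, combining the invertible change of coordinates with the uniqueness of the partial fraction decomposition and the converse direction, is complete.
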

The data $P_j(X)$ for $j\in [s]$ can be computed in the following way. 
We focus on the case $\gcd(P(X),Q(X))=1$ which is important for our scope. Then  $b_j$ is the order of the pole $\alpha_j^{-1}$  in the rational function ${\displaystyle{\frac{Q(z)}{P(z)}}}.$ Moreover, since $P(0)=1$, we have 
\begin{equation}\label{fact}
P(z)=c_b\prod_{j=1}^s(z- \alpha_j^{-1})^{b_j}=\frac{c_b(-1)^b}{\prod_{j=1}^s\alpha_j^{b_j}}\prod_{j=1}^s(1-\alpha_jz)^{b_j}=\prod_{j=1}^s(1-\alpha_jz)^{b_j}.
\end{equation}
By partial fractions decomposition we have
\begin{equation}\label{partial-classic}
\frac{Q(z)}{P(z)}=\sum_{j=1}^s\sum_{i=1}^{b_j}\frac{s_{ji}}{(z-\alpha_j^{-1})^{i}},
\end{equation}
where
\begin{equation*}\label{sij}
s_{ji}=\frac{1}{(b_j-i)!}\lim_{z \to \alpha_j^{-1}}D_z^{b_j-i}\left(\frac{(z-\alpha_j^{-1})^{b_j}Q(z)}{P(z)}\right),
\end{equation*}
and $D_z^{\ell}$ denotes the derivation of order $\ell\in \N_0$ with respect to the variable $z$.
Observe that, for every $j\in [s]$, we have $s_{jb_j}\neq 0$ because otherwise the order of the pole $\alpha_j^{-1}$  in ${\displaystyle{\frac{Q(z)}{P(z)}}}$ would be less than $b_j.$ 

Putting
\begin{equation}\label{rij}
r_{ji}:=(-\alpha_j)^is_{ji},
\end{equation}
from \eqref{partial-classic}, \eqref{gen-compositions-pol} and \eqref{fact}  we then obtain
\begin{eqnarray*}\label{partial2}
\frac{Q(z)}{P(z)} & = & \sum_{j=1}^s\sum_{i=1}^{b_j}\sum_{n\geq 0}r_{ji}C_i(n+i)\alpha_j^nz^n=\sum_{n\geq 0}\sum_{j=1}^s\left(\sum_{i=1}^{b_j}r_{ji} C_i(n+i)\right)\alpha_j^n z^n\\
& = & \sum_{n\geq 0}\sum_{j=1}^sP_j(n)\alpha_j^n z^n,\quad {\text{\rm where}}\quad P_j(n):=\sum_{i=1}^{b_j}r_{ji}C_i(n+i)
\end{eqnarray*}
has degree exactly $b_j-1$ since $r_{jb_j}\neq 0$.
By identifying coefficients we get
$$
a_n=\sum_{j=1}^sP_j(n)\alpha_j^n\qquad {\text{\rm for~all}}\quad n\ge 0.
$$
An explicit expression for the coefficients of the polynomials $P_j$ can be obtained as follows. Fix $j\in [s].$ We have
\begin{eqnarray*}\label{explicitPj}
P_j(n)&=&\sum_{i=1}^{b_j}r_{ji}C_i(n+i)=\sum_{i=1}^{b_j}r_{ji}\sum_{\ell=0}^{i-1}a_{i\ell}(n+i)^{\ell}\\
&=& \sum_{i=1}^{b_j}r_{ji}\sum_{\ell=0}^{i-1}\sum_{t=0}^{\ell}a_{i\ell}\binom{\ell}{t}n^t i^{\ell-t}= \sum_{i=1}^{b_j}\sum_{t=0}^{i-1}\sum_{\ell=t}^{i-1}r_{ji}a_{i\ell}\binom{\ell}{t}n^t i^{\ell-t}\\
&=& \sum_{t=0}^{b_j-1}\left(\sum_{i=t+1}^{b_j}\sum_{\ell=t}^{i-1}r_{ji}a_{i\ell}\binom{\ell}{t}i^{\ell-t}\right)n^t.
\end{eqnarray*}
Thus, 
\begin{equation}\label{defuij}
P_j(n)=\sum_{t=0}^{b_j-1} u_{jt}n^t, \quad {\text{\rm where}}\quad 
u_{jt}=\sum_{i=t+1}^{b_j}\sum_{\ell=t}^{i-1}r_{ji}a_{i\ell}\binom{\ell}{t}i^{\ell-t}.
\end{equation}
Now recall that the $a_{i\ell}$ are explicitly given by \eqref{Stir} in terms of the Stirling numbers and the $r_{ji}$ are explicitly given by \eqref{rij}.

\begin{example}
\label{ex}
\begin{itemize}
\item[(i)] Let $A(X)\in {\mathbb C}[X]$ be of degree at most $t$. The sequence of general term $a_n:=A(n)$ for  $n\geq 0$  satisfies \eqref{eq:an} with $s=1$, $\alpha_1=1$ and $b_1=t+1$. Thus, 
$$
\sum_{n\ge 0} a_n z^n=\frac{Q(z)}{(1-z)^{t+1}}\qquad {\text{for}}\qquad |z|<1
$$
holds with some $Q(X)\in {\mathbb C}[X]$ of degree at most $t$.
\item[(ii)]  For $\omega\in U_m$,  the function $T_{\omega}:\N_0\rightarrow \C$ defined, for every $n\in \N_0$, by $T_{\omega}(n)=\omega^n$ are $m$ integer periodic functions of period $m$ which form a basis for the ${\mathbb C}$-vector space of periodic functions of period $m$. If $T$ is an integer periodic function of period $m\in N$, then 
there exists $Q(X)\in {\mathbb C}[X]$ with degree less than $m$ such that
\begin{equation}\label{per}
\sum_{n\ge 0} T(n) z^n=\frac{Q(z)}{1-z^m}\qquad {\text{for}}\qquad |z|<1.
\end{equation}
{\rm Indeed, in order to show that there exists a unique choice of $c_{\omega}\in \C$, for $\omega\in U_m$, such that $T=\sum _{\omega}c_{\omega}T_{\omega}$, it is sufficient to note that the linear system in the variables $(c_{\omega})_{\omega\in U_m}$
$$
T(k)=\sum_{\omega\in U_m} c_{\omega} \omega^k\qquad {\text{for}}\qquad k=0,1,\ldots,m-1
$$
has coefficient matrix given by the  invertible Vandermonde matrix with columns $(1,\omega,\ldots,\omega^{m-1})^T$ for $\omega\in U_m$. Thus, the sequence of general term $a_n=T(n)$ satisfies \eqref{eq:an} with 
$s=m$, $b_1=\cdots=b_m=1$ and $\{\alpha_1,\ldots,\alpha_m\}=U_m$. Hence, there exists $Q(X)\in {\mathbb C}[X]$ with the required properties such that
\eqref{per}  holds. }

\item[(iii)] Let $k\geq 2$. Then  $p_k(n)$ is representable as a quasi-polynomial by
\begin{equation*}\label{rep-part}
p_k(n)= P_1(n)+S(n),
\end{equation*}
where $P_1(X)\in \Q[X]$ has degree $k-1$ and $S(n)$ is a quasi--polynomial of degree $\lfloor k/2\rfloor-1$ and quasi--period $\delta(k)$. $P_1$ is called the polynomial part of $p_k$.

{\rm By \eqref{gen-part}, we have
$$\sum_{n\ge 1} p_k(n)z^n=\frac{z^k}{(1-z)(1-z^2)\cdots (1-z^k)},$$
so that Proposition \ref{recurr} applies with 
$$
P(z)=(1-z)(1-z^2)\cdots (1-z^k)\quad  \hbox{and}\quad Q(z)=z^k,
$$
which are coprime. The roots of $P$ are the elements of $V=\bigcup_{m=1}^k U_m$ and, since $V$ is closed under inversion, we have that $V$ coincides with the set  of the inverses of the roots of $P$. Let $s=|V|$. Note that  $V$ is expressible as the disjoint union $V=\bigsqcup_{m=1}^k\{\omega\in U_m: \omega\  \hbox{is primitive}\}$. Thus, since $k\geq 2$, we have $s=\sum_{m=1}^k\phi(m)\geq 2.$ We  order its elements $\omega_1,\dots, \omega_s$ considering the primitive $m$-roots of unity starting from $m=1$ and finishing with $m=k.$ Hence $\omega_1=1,\omega_2=-1.$ Let $b_j\geq 1$ be the multiplicity of $\omega_j$, for $j\in [s].$ Clearly if $\omega_j$ is a primitive $m$-root of unity, for some $m\in [k]$, we have that $b_j=\lfloor k/m\rfloor$.  In particular, $b_1=k$, $b_2=\lfloor k/2\rfloor$.
Then we have
\begin{equation}\label{part-dec}
p_k(n)=\sum_{j=1}^sP_j(n)\omega_j^n\quad  {\text{\rm for all}}\quad n\geq 1
\end{equation}
by the explicitly computable $P_j(X)\in  \C[X]$ of degree $b_j-1$, $j\in [s]$, given in \eqref{defuij}. Thus  $p_k$ is a quasi--polynomial and
the expression \eqref{part-dec} can be split into 
\begin{equation*}\label{part-dec2}
p_k(n)=P_1(n)+\sum_{j=2}^sP_j(n)\omega_j^n\quad  {\text{\rm for all}}\quad n\geq 1.
\end{equation*}
Define then $S(n):=\sum_{j=2}^sP_j(n)\omega_j^n$. By Proposition  \ref{recurr} and the above remarks we know that its degree is $\lfloor k/2\rfloor-1$ because $\omega_2$ has multiplicity $\lfloor k/2\rfloor$  and the multiplicities of the remaining roots  $\omega_j$ for $j\ge 3$ are at most $\lfloor k/2\rfloor$. The statement about the period is also clear.}

\item[(iv)]  If $k\ge 2$, there exists no $P(X)\in \mathbb{C}[X]$ such that
\begin{equation}
\label{eq:consequence0}
p_k(n)=P(n)\quad {for~all}\quad n\geq 1.
\end{equation}
Furthermore, if $k\ge 4$, there exists no $P(X)\in \mathbb{C}[X]$ and no integer periodic function $T$  such that
\begin{equation}
\label{eq:consequence}
p_k(n)=P(n)+T(n)\quad {for~all}\quad n\geq 1.
\end{equation}
\end{itemize}
{\rm Indeed, by $(i)$ and $(iii)$, the generating function of the sequence appearing in the right--hand side of \eqref{eq:consequence} is of the form
\begin{equation}
\label{eq:RHS}
\frac{Q(z)}{(1-z)^{t+1}}+\frac{R(z)}{1-z^m},
\end{equation}
whereas the generating function of the sequence appearing in the left--hand side is 
\begin{equation}\
\label{eq:LHS}
\frac{z^k}{(1-z)(1-z^2)\cdots (1-z^k)}.
\end{equation}
For $k\ge 4$, the rational function appearing at \eqref{eq:LHS} has $z=-1$ as a pole of multiplicity $\lfloor k/2\rfloor\ge 2$, which is not the case for the rational function indicated at \eqref{eq:RHS}, so equality \eqref{eq:consequence} is impossible. The fact that \eqref{eq:consequence0} is also impossible for $k=2,3$ is also immediate. }
\end{example}

\section{Coprime partitions and Jordan functions }\label{cop-part-Jord}

We are now ready to solve Problems \ref{pb3}--\ref{pb2}.  
\begin{theorem}\label{comb-genera} Let $k\geq 2$. Then the following facts hold:
\begin{itemize}
\item[$(i)$] $p_k'$ is a $\C$-linear combination of the Jordan root totient functions in the entire domain $n\geq 1$.
\item[$(ii)$] $p_k'$ is a $\C$-linear combination of the Jordan modulo totient functions in the entire domain $n\geq 1$.
\item[$(iii)$] $p_k'(n)$ is not a $\mathbb{C}$-linear combination of the Jordan totient functions in the entire domain $n\geq 1$.
\item[$(iv)$]  If $p'_k(n)$ is a $\C$-linear combination of the Jordan totient functions in a domain $n\geq N_k$ for some suitable $N_k\in\N$ depending on $k$, then $p_k'(n)$ is a rational multiple of $J_{k-1}(n)$ and $k\in \{2,3\}.$
Further, the minimal value of $N_k$ is $k+1$ for both $k\in \{2,3\}$. 
\end{itemize}
\end{theorem}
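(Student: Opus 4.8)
The four items divide naturally: $(i)$--$(iii)$ fall out of the machinery already in place, while $(iv)$ carries the real content. For $(i)$ I would combine the M\"obius inversion \eqref{Mob-part} with the quasi-polynomial expansion $p_k(d)=\sum_{j=1}^{s}P_j(d)\,\omega_j^{d}$ of \eqref{part-dec}, in which $P_j(d)=\sum_{t=0}^{b_j-1}u_{jt}d^{t}$. Substituting and interchanging the two finite sums, the inner sum $\sum_{d\mid n}\mu(n/d)\,\omega_j^{d}d^{t}$ is, by definition \eqref{Jord-root}, exactly $J_{(t,\omega_j)}(n)$; hence $p_k'=\sum_{j=1}^{s}\sum_{t=0}^{b_j-1}u_{jt}\,J_{(t,\omega_j)}$ for all $n\ge 1$, which is $(i)$ (the term $j=1$, where $\omega_1=1$, already contributes ordinary Jordan totients $J_t=J_{(t,1)}$). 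For $(ii)$ I would then replace each $J_{(t,\omega_j)}$ by its expansion \eqref{root-modulo} into Jordan modulo totient functions.

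For $(iii)$ the decisive tool is a Dirichlet convolution with the constant function. M\"obius inversion of \eqref{Jordan} yields $\sum_{d\mid n}J_i(d)=n^{i}$, and $p_k(n)=\sum_{d\mid n}p_k'(n/d)$ by definition. Thus, were $p_k'(n)=\sum_i c_iJ_i(n)$ to hold for every $n\ge 1$, summing over divisors would give $p_k(n)=\sum_i c_i\sum_{d\mid n}J_i(d)=\sum_i c_i n^{i}$ for all $n$, making $p_k$ a polynomial and contradicting the impossibility of \eqref{eq:consequence0} in Example \ref{ex}\,(iv), valid for every $k\ge 2$.

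Item $(iv)$ is the main obstacle, and I would run the same convolution while keeping track of the finitely many exceptional arguments. Setting $E:=p_k'-\sum_i c_iJ_i$, the hypothesis means $E(n)=0$ for $n\ge N_k$, so $E$ is supported on the finite set $\{1,\dots,N_k-1\}$. The divisor sum $\sum_{d\mid n}E(d)=\sum_{d\mid n,\ d<N_k}E(d)$ is then a finite combination of terms, the $d$-th of which contributes only when $d\mid n$, a condition periodic in $n$ of period $d$; hence $T(n):=\sum_{d\mid n}E(d)$ is periodic, of period dividing $\mathrm{lcm}(1,\dots,N_k-1)$. Summing the hypothesis over divisors therefore gives $p_k=P+T$ with $P(n)=\sum_i c_i n^{i}$ a polynomial and $T$ periodic. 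By the impossibility of \eqref{eq:consequence} in Example \ref{ex}\,(iv) this cannot happen for $k\ge 4$, so necessarily $k\in\{2,3\}$. The delicate step is exactly this bookkeeping: one must verify that relaxing "all $n$" to "$n\ge N_k$" costs only a periodic error and so does not escape the forbidden polynomial-plus-periodic shape.

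It remains, for $k\in\{2,3\}$, to pin down the combination and the threshold. Here \eqref{k2N} and \eqref{k3N} state directly that $p_k'=\tfrac{1}{k!(k-1)!}J_{k-1}$ for large $n$, exhibiting $p_k'$ as a rational multiple of $J_{k-1}$; uniqueness of this representation follows because $J_0(n)=0$ for $n\ge 2$ by \eqref{Jord0} (so the coefficient of $J_0$ is invisible on any tail) while $J_1,J_2,\dots$ are linearly independent on every tail $n\ge N$, as one sees by comparing their values $J_i(p)=p^{i}-1$ at large primes $p$. For the minimal threshold I would check the boundary directly: the formula holds for all $n\ge k+1$ by \eqref{k2N}--\eqref{k3N}, while at $n=k$ the forced value $\tfrac{1}{k!(k-1)!}J_{k-1}(k)$ equals $\tfrac12$ for $k=2$ and $\tfrac23$ for $k=3$, neither of which matches $p_k'(k)=1$ (realised by the single partition $[1,\dots,1]$). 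Hence no admissible combination reaches $n=k$, and the minimal value of $N_k$ is $k+1$.
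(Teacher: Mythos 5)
Your argument is correct, and for items $(i)$--$(iii)$ and the first half of $(iv)$ it is essentially the paper's proof: the same substitution of \eqref{part-dec} into \eqref{Mob-part} for $(i)$, the same appeal to \eqref{root-modulo} for $(ii)$, the same divisor-sum collapse $\sum_{d\mid n}J_i(d)=n^i$ against Example \ref{ex}\,$(iv)$ for $(iii)$, and your periodic error $T(n)=\sum_{d\mid n,\,d<N_k}E(d)$ is exactly the paper's function $f_N^k$, with the same $\lcm(1,\dots,N_k-1)$ periodicity argument ruling out $k\ge 4$. Where you genuinely diverge is the endgame of $(iv)$. The paper first extracts the degree and leading coefficient of $P$ from the asymptotic \eqref{as-part} (getting $s=k-1$ and $c_{k-1}=\frac{1}{k!(k-1)!}$, since the periodic part is $o(n^{k-1})$), and then pins down the remaining coefficient $c_1$ in the case $k=3$ by evaluating the putative identity at $n=3$ and $n=4$, which give the incompatible values $c_1=1/6$ and $c_1=0$. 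You instead invoke \eqref{k2N}--\eqref{k3N} as a known tail representation and force \emph{all} coefficients at once by linear independence of $J_1,J_2,\dots$ on every tail, verified at large primes via $J_i(p)=p^i-1$; the threshold then follows from the single evaluation $\frac{1}{k!(k-1)!}J_{k-1}(k)\ne 1=p_k'(k)$. Both routes are sound and of comparable length: yours delivers uniqueness of the representation more systematically (one linear-independence lemma instead of ad hoc evaluations), while the paper's asymptotic step has the virtue of determining the shape of $P$ without presupposing \eqref{k2N}--\eqref{k3N}, which it then uses only to confirm that the threshold $k+1$ is attained. The one place where you are terser than you should be is the linear-independence claim itself, which you assert in a clause; it deserves the two-line justification that a nontrivial relation $\sum_{i\ge 1}c_i(p^i-1)=0$ holding at infinitely many primes forces a nonzero polynomial to have infinitely many roots.
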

In both cases $(i)$ and $(ii)$ above the coefficients of the linear combinations are easily computable. 
\begin{proof} In Example \ref{ex}\,$(iii)$ we have seen that
\begin{equation}\label{part-dec10}
p_k(n)=\sum_{j=1}^sP_j(n)\omega_j^n\quad  {\text{\rm for all}}\quad n\geq 1
\end{equation}
for suitable explicitly computable $P_j(X)\in  \C[X]$ of degree $b_j-1$, $j\in [s]$ and $\{\omega_1,\dots, \omega_s\}=\bigcup_{m=1}^k U_m$.

$(i)$ By  \eqref{Mob-part}, \eqref{part-dec10} and \eqref{defuij}, we have
\begin{eqnarray}\label{prelim}\nonumber
p'_k(n)&=&\sum_{d\mid n}\sum_{j=1}^sP_j(d)\omega_j^d\mu(n/d)=\sum_{d\mid n}\sum_{j=1}^s\sum_{t=0}^{b_j-1} u_{jt}d^t\omega_j^d\mu(n/d)\\
&=&\sum_{j=1}^s \sum_{t=0}^{b_j-1}u_{jt} \sum_{d\mid n}\omega_j^dd^t\mu(n/d)=\sum_{j=1}^s \sum_{t=0}^{b_j-1}u_{jt} J_{(t,\omega_j)}(n).
\end{eqnarray}
which expresses explicitly $p'_k$  as a $\C$-linear combination of Jordan root totient functions.

$(ii)$ It follows immediately by $(i)$ and \eqref{root-modulo}.

$(iii)$ Assume the contrary. Then there exist $s\in\mathbb{N}$ and  $c_i\in \mathbb{C}$ for all $i=0,\dots, s$, such that
$$
p_k'(n)=\sum_{i=0}^{s} c_iJ_i(n) \quad  {\text{\rm for~all}}\quad n\geq 1.
$$
Writing the above relation for all $d\mid n$ and using \eqref{Mob-part}, we then get  for every $n\geq 1$,
\begin{equation*}\label{pkpol}
p_k(n)=\sum_{d\mid n} p_k'(d)=\sum_{d\mid n}\sum_{i=0}^{s} c_iJ_i(d)=\sum_{i=0}^{s} c_i\sum_{d\mid n}J_i(d)=\sum_{i=0}^{s} c_i n^i=P(n),
\end{equation*}
where $P(X)=\sum_{i=0}^{s} c_i X^i\in \mathbb{C}[X],$ against Example \ref{ex}\,$(iv)$.

$(iv)$  Let $N_k\in \mathbb{N}$ be minimum  such that, for $n\geq N_k$, $p_k'(n)$  is a $\mathbb{C}$-linear combination of the Jordan totient functions.
For shortness we set $N:=N_k.$
Surely $ p_k'(n)$ cannot be, for sufficiently large $n$, a multiple of $J_0(n).$ Thus, there exist
 $s\in\mathbb{N}$ and  $c_i\in \mathbb{C}$ for $i\in[ s]_0$  with $c_s\neq 0$, such that
 \begin{equation}\label{kN}
 p_k'(n)=\sum_{i=0}^{s}c_iJ_i(n) \quad {\text{\rm for~all}}\quad n\geq N.
 \end{equation}

As a consequence of $(iii)$ above we have that $N\geq 2$. Thus if $n\geq N$, we also have $n\geq 2$ and so  $J_0(n)=0$. Hence,  whatever $c_0$ is in \eqref{kN},
we can surely  guarantee the same equality adopting $c_0=0.$ Let then
\begin{equation}\label{kN2}
 p_k'(n)=\sum_{i=1}^{s}c_iJ_i(n) \quad {\text {\rm for~all}}\quad n\geq N,
 \end{equation}
and  $P(X)=\sum_{i=1}^{s} c_i X^i\in \mathbb{C}[X]$ be the corresponding polynomial. Note that $\mathrm{deg}(P)=s.$

Define the function $f_N^k:\N_0\rightarrow \C$ given for every $n\in \N$ by

 \begin{equation}\label{fNk}
 f_N^k(n)=\sum_{\substack{d\mid n\\ d<N}}\left( p_k'(d)-\sum_{i=1}^{s} c_iJ_i(d)\right)
 \end{equation}
and by $f_N^k(0)=f_N^k(m)$, where $m:=\delta(N-1).$
 
We claim that
 \begin{equation}\label{fNkeq}
 p_k(n)=P(n)+f_N^k(n) \quad {\text {\rm for~all}}\quad n\geq 1.
 \end{equation}
Indeed, by \eqref{Mob-part} and \eqref{kN2}, for every $n\geq 1$, we have
\begin{eqnarray*}
 p_k(n)& = &\sum_{d\mid n} p_k'(d)=\sum_{\substack{d\mid n\\ d<N}}p_k'(d)+\sum_{\substack{d\mid n\\ d\geq N}}p_k'(d)\\
 & = & \sum_{\substack{d\mid n\\ d<N}}p_k'(d)+\sum_{\substack{d\mid n\\ d\geq N}}\sum_{i=1}^{s}c_iJ_i(d)\\
& = & \sum_{\substack{d\mid n\\ d<N}}\left(p_k'(d)-\sum_{i=1}^{s}c_iJ_i(d)\right)+\sum_{d\mid n}\sum_{i=1}^{s}c_iJ_i(d)\\
&=&  f_N^k(n)+\sum_{i=1}^{s}c_in^i= f_N^k(n)+P(n).
\end{eqnarray*}

 We next claim that
 \begin{equation}\label{f-pe}
  f_N^k\ \hbox{ is an integer  periodic function.}
  \end{equation}
In order to prove that it is enough to show that $m\in M( f_N^k)$. Let $\ell,n\in \N$. Since every $d<N$ divides $m$, we have
  \begin{eqnarray*}
 f_N^k(n+\ell m)& =&\sum_{\substack{d\mid n+\ell m\\ d<N}}\left(p_k'(d)-\sum_{i=1}^{s}c_iJ_i(d)\right) \\
&=&\sum_{\substack{d\mid n\\ d<N}}\left(p_k'(d)-\sum_{i=1}^{s}c_iJ_i(d)\right)
= f_N^k(n).
\end{eqnarray*}
Hence, trivially  we also have $f_N^k(0+\ell m)=f_N^k(m)=f_N^k(0).$

By \eqref{fNkeq} and \eqref{f-pe} we then have that $p_k$ is the sum of a polynomial and of  an integer periodic function. By Example \ref{ex}\,$(iv)$ this rules out $k\geq 4$, so that $k\in \{2,3\}.$

Now, by \eqref{as-part} and by \eqref{fNkeq}, we get
$$
f_N^k(n)+P(n)=\frac{1}{k! (k-1)!} n^{k-1} +O(n^{k-2}).
$$
By \eqref{f-pe},  ${\displaystyle{\frac{ f_N^k(n)}{n^{k-1}}}}$ tends to $0$ as $n$ goes to infinity. Thus,  ${\displaystyle{\frac{P(n) }{n^{k-1}}}}$ tends to ${\displaystyle{\frac{1}{k!(k-1)!}}}$ as $n$ goes to infinity, which implies
$s=\deg(P)=k-1$ and $c_{k-1}=\frac{1}{k!(k-1)!}.$
In particular, we have $P(X)=\sum_{i=1}^{k-1} c_i X^i.$

If $k=2$, this gives  $P(X)=\frac{X}{2}$ and \eqref{kN2} becomes
\begin{equation}\label{Q}
p_2'(n)=\frac{J_1(n)}{2}\  \hbox{ for~all} \ n\geq N,
\end{equation}
 which surely does not hold for $N=2$, because $p_2'(2)=1\neq 1/2.$
We know from \eqref{k2N} that
 \begin{equation}\label{R}
 p_2'(n)=\frac{J_1(n)}{2}\  \hbox{ for~all} \ n\geq 3.
 \end{equation}
  Hence, the minimum $N$ such that there exists an expression of $p_2'(n)$ as a $\C$-linear combination of the Jordan totient functions for $n\geq N$ is $N=3$ and no other such expression with $N=3$ is possible besides \eqref{R}.

If $k= 3,$ we then get $P(X)=c_1X+\frac{X^2}{12}$  and \eqref{kN2} becomes
\begin{equation}\label{A}
p_3'(n)=c_1J_1(n)+\frac{J_2(n)}{12}\ \hbox{ for~all} \  n\geq N.
\end{equation}
Assume that  $N=3$. Then, by \eqref{A}, $p_3'(3)=1$ implies $c_1=1/6$ while
$p_3'(4)=1$ implies $c_1=0,$ a contradiction. It follows that $N\geq 4.$  By \eqref{k3N}, we know that
  \begin{equation}\label{B}
  p_3'(n)=\frac{J_2(n)}{12}\ \hbox{ for~all} \  n\geq 4.
  \end{equation}
Thus, the minimum $N$ such that there exists an expression of $p_3'(n)$ as a $\C$-linear combination of the Jordan totient functions for $n\geq N$ is $N=4.$
We finally observe that no other such expression with $N=4$ is possible besides \eqref{B}. Indeed, as previously observed, the computation of $p_3'(4)$ by \eqref{A} implies $c_1=0.$
\end{proof}

\section{Computation of some generalized Jordan totient functions}\label{sum-mu}

In the last two sections of the paper we illustrate how to explicitly find the polynomials $P_j$ of \eqref{part-dec} relying on the generating function of $p_k(n)$. This allows  us
 to represent $p_k'$ as a $\C$-linear combination of Jordan root functions.
Next we explain how to explicitly compute the Jordan modulo totient functions in which those Jordan root totient functions split, making use of the Jordan-Dirichlet totient functions. We limit ourselves to treat $k\in \{2,3,4\}$. Anyway the general method should be clear.

In this section, we  gather together all the computations which we will need later. They illustrate very well how to connect the diverse generalizations of the Jordan totient functions in order to obtain one from the other.
For this reason they are of interest in themselves.  In the next section, we examine separately $p_2', p_3'$ and $p'_4$.

\begin{lemma}
\label{lem:1} Let $n\in\mathbb{N}$ and write  $n=3^bm_1$ with $\gcd(3,m_1)=1$. Then
\begin{equation}\label{31}
J_0^{1,3}(n)=\left\{\begin{matrix}
1& {\text{\rm  if}}& n=1;\\
-1& {\text{\rm  if}}& n=3;\\
0 & {\text{\rm  if}}& \exists~ p\equiv 1\pmod 3,~p\mid m_1;\\
0 & {\text{\rm  if}} & b\geq 2;\\
(-1)^{\Omega(n)} 2^{\omega(m_1)-1}& {\text{\rm  }}& {\text{\rm otherwise}}.\\
\end{matrix}\right.
\end{equation}

\end{lemma}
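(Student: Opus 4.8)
The plan is to exploit the machinery already assembled in Section \ref{sec:Jordan}, which expresses a Jordan modulo totient function through Jordan--Dirichlet totient functions, rather than estimating the defining divisor sum $J_0^{1,3}(n)=\sum_{d\mid n,\ d\equiv 1\,(3)}\mu(n/d)$ directly. Since $\gcd(1,3)=1$ and $\phi(3)=2$, formula \eqref{eq:Jkjm} with $k=0$, $j=1$, $m=3$ gives
\begin{equation*}
J_0^{1,3}=\frac{1}{2}\bigl(J_0(\chi_0;\cdot)+J_0(\chi_1;\cdot)\bigr),
\end{equation*}
where $\chi_0$ is the principal and $\chi_1$ the real nontrivial Dirichlet character modulo $3$, so that $\overline{\chi}(1)=1$ for both. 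Thus the whole computation reduces to evaluating the two multiplicative functions $J_0(\chi_0;\cdot)$ and $J_0(\chi_1;\cdot)$, for which Lemma \ref{char}$(i)$ supplies a closed product formula.

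First I would apply Lemma \ref{char}$(i)$ to $\chi_0$. With $k=0$ the local factor at a prime $p\mid n$ is $\chi_0(p)^{c_p-1}(\chi_0(p)-1)$. For $p\neq 3$ this is $1^{c_p-1}(1-1)=0$, while for $p=3$ it is $0^{b-1}(0-1)$, which is $0$ for $b\ge 2$ and $-1$ for $b=1$. Hence $J_0(\chi_0;n)$ vanishes unless $n\in\{1,3\}$, with $J_0(\chi_0;1)=1$ and $J_0(\chi_0;3)=-1$. Next I would apply Lemma \ref{char}$(i)$ to $\chi_1$: the local factor at $p=3$ is again $0$ for $b\ge 2$ and equals $(-1)^{b}$ for $b\le 1$ (reading the absent factor as $1$); at a prime $p\equiv 1\pmod 3$ it is $1^{c_p-1}(1-1)=0$; and at a prime $p\equiv 2\pmod 3$ it equals $(-1)^{c_p-1}(-1-1)=(-1)^{c_p}2$. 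Multiplying, $J_0(\chi_1;n)=0$ whenever $b\ge 2$ or some prime $\equiv 1\pmod 3$ divides $m_1$, and otherwise $J_0(\chi_1;n)=(-1)^{b}\prod_{p\mid m_1}(-1)^{c_p}2=(-1)^{b+\Omega(m_1)}2^{\omega(m_1)}=(-1)^{\Omega(n)}2^{\omega(m_1)}$, using $\Omega(n)=b+\Omega(m_1)$.

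Finally I would assemble the two pieces. For $n=1$ one gets $\tfrac12(1+1)=1$, and for $n=3$ one gets $\tfrac12(-1-1)=-1$. In every remaining case $J_0(\chi_0;n)=0$, so $J_0^{1,3}(n)=\tfrac12 J_0(\chi_1;n)$: this is $0$ when $b\ge2$ or when some prime $\equiv1\pmod3$ divides $m_1$, and equals $\tfrac12(-1)^{\Omega(n)}2^{\omega(m_1)}=(-1)^{\Omega(n)}2^{\omega(m_1)-1}$ otherwise. Note that the ``otherwise'' branch forces $m_1>1$ (else $n=3^b\in\{1,3\}$), so $\omega(m_1)\ge 1$ and the exponent is a nonnegative integer; this reproduces \eqref{31} line by line.

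The product bookkeeping is routine; the one point demanding care is to keep the principal character $\chi_0$ modulo $3$ distinct from the trivial character modulo $1$ of Section \ref{sec:Jordan} --- it is precisely $\chi_0$ (vanishing at $3$) that kills every $n$ with a prime factor other than $3$ and forces the collapse to $n\in\{1,3\}$ in the first two lines. The remaining delicate bit is tracking the exponent $b$ of $3$ through the three regimes $b=0,1,\ge 2$ and reconciling the parity factor $(-1)^{b}(-1)^{\Omega(m_1)}$ coming from $\chi_1$ with the stated $(-1)^{\Omega(n)}$.
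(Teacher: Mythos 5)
Your argument is correct and is essentially the paper's own: both proofs decompose the congruence condition $d\equiv 1\pmod 3$ via the Dirichlet characters modulo $3$ and then evaluate the resulting Jordan--Dirichlet functions by the product formula of Lemma \ref{char}. The only (cosmetic) difference is that the paper first strips off the power of $3$ via $J_0^{1,3}(n)=\mu(3^b)J_0^{1,3}(m_1)$ and then applies Lemma \ref{char}$(ii)$ to $m_1$, whereas you let the principal character absorb the factor $3^b$ and apply \eqref{eq:Jkjm} together with Lemma \ref{char}$(i)$ directly.
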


\begin{proof}  For shortness, write $f(n)$ instead of $J_0^{1,3}(n).$ Then
$$
f(n)=\sum_{\substack{d\mid n\\ d\equiv 1\pmod 3}} \mu(n/d).
$$
If $d\equiv 1\pmod 3$ and $d\mid n$, then $d\mid m_1$. Thus, $3^b\mid n/d$ over all such divisors $d$ and $n/d=3^b(m_1/d)$ with $3^b$ and $m_1/d$ coprime. Thus, by the multiplicativity of the $\mu$ function, we get
\begin{equation}\label{equa1}
f(n)=\sum_{\substack{d\mid n\\ d\equiv 1\pmod 3}} \mu(n/d)=\sum_{\substack{d\mid m_1\\ d\equiv 1\pmod 3}} \mu(3^b)\mu(m_1/d)=\mu(3^b)f(m_1).
\end{equation}
Hence, if $b\geq 2$ we have $f(n)=0$. Let then $b\in\{0,1\}$. By \eqref{equa1}, it suffices to study $f(m_1)$.
 Let $\chi$ be the unique non principal Dirichlet character modulo $3$. Then $\chi(k)=1$ if $k\equiv 1\pmod 3$,
$\chi(k)=-1$ if $k\equiv 2\pmod 3$ and $\chi(k)=0$ if $\gcd(k,3)>1$.  It is easily  seen that
\begin{equation*}
f(m_1)=\frac{1}{2}\sum_{d\mid m_1} (\chi(d)+1)\mu(m_1/d)=\frac{1}{2}\sum_{d\mid m_1} \chi(d)\mu(m_1/d)+\frac{1}{2}\sum_{d\mid m_1} \mu(m_1/d).
\end{equation*}
Since $m_1$ is coprime to $3$, by \eqref{Jord-Dir-formula}, we get for $m_1>1$
\begin{eqnarray}\label{equa2}\nonumber
f(m_1)&=&\frac{1}{2}\sum_{d\mid m_1} \chi(d)\mu(m_1/d)=\frac{1}{2}\chi(m_1)\prod_{p\mid m_1}\left(1-\frac{1}{\chi(p)}\right)\\\nonumber
&=&\frac{1}{2}\chi(m_1)\prod_{\substack{p\mid m_1 \\ p\equiv 1\pmod 3}}(1-1)\prod_{\substack{p\mid m_1 \\ p\equiv 2\pmod 3}}(1+1)\\\nonumber
&=& \left\{\begin{matrix} 0 & {\text{\rm if}} & p\mid m_1~  {\text{\rm for some}}~ p\equiv 1\pmod 3;\\
 (-1)^{\Omega(m_1)}2^{\omega(m_1)-1} & {\text{\rm if}} & p\equiv 2\pmod 3 ~  {\text{\rm for all}}~ p\mid m_1.
\end{matrix}\right.
\end{eqnarray}
Thus, by \eqref{equa1}, taking into consideration that $f(1)=1$,
 the formula \eqref{31} for $f(n)$ immediately follows.
\end{proof}

\begin{lemma}\label{comp}  Let $n\in\mathbb{N}$. 
\begin{itemize}
\item[$(i)$] Then
$$
J_{(0,-1)}(n)=\left\{\begin{matrix} -1 & {\text{\rm if}} & n=1;\\
2 & {\text{\rm if}} & n=2;\\
0 & {\text{\rm  if}} & n>2.\end{matrix}\right.
$$
\item[$(ii)$] Write $n=2^am$ with $m$ odd. Then
$$
J_{(1,-1)}(n)=\left\{ \begin{matrix} -\phi(n) & {\text{\rm if}} & a=0;\\
3\phi(n) & {\text{\rm if}} & a=1;\\
\phi(n) & {\text{\rm if}} & a\ge 2.\end{matrix}
\right.
$$
\item[$(iii)$] Write $n=2^am$ with $m$ odd. Then
$$
J_{(0,i^k)}(n)=\left\{\begin{matrix} i^k & {\text{\rm if}} & n=1;\\
-1-i^k & {\text{\rm if}} & n=2;\\
0 & {\text{\rm if}} & \exists~p\equiv 1\pmod 4,~p\mid m; \\
2 & {\text{\rm if}} & n=4;\\
0 & {\text{\rm if}} & a\ge 3 ~{\text{\rm or }}  a= 2~{\text{\rm and}} ~m>1;\\
i^k(-1)^{\Omega(n)}2^{\omega(m)} & {\text{\rm }} & {\text{\rm otherwise}},\\
\end{matrix}\right.
$$
for $k=1,3$.
\item[$(iv)$] Write $n=3^bm_1$ with $\gcd(m_1,3)=1$, and denote by $\omega$ the principal $3$-root of $1$. Then
$$
J_{(0,\omega^k)}(n)=\left\{\begin{matrix} \omega^k & {\text{\rm if}} & n=1;\\
 -\omega^k+1 & {\text{\rm if}} & n=3;\\
 0 & {\text{\rm if}} & \exists~ p\equiv 1\pmod 3,~p\mid m_1;\\
 0 & {\text{\rm if}} & b\ge 2;\\
  (\omega^k-\omega^{2k}) (-1)^{\Omega(n)}2^{\omega(m_1)-1} & {\text{\rm }} & {\text{\rm otherwise}},\\
 \end{matrix}\right.
$$
for $k=1,2$.
\end{itemize}
\end{lemma}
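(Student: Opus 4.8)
The plan is to handle all four parts uniformly through the decomposition \eqref{root-modulo}, which writes each Jordan root totient function as a short $\C$-linear combination of Jordan modulo totient functions. If $m$ denotes the order of the root of unity involved ($m=2$ in $(i)$ and $(ii)$, $m=4$ in $(iii)$, $m=3$ in $(iv)$), then $J_{(k,\omega)}=\sum_{j=0}^{m-1}\omega^j J_k^{j,m}$, so everything reduces to computing the finitely many functions $J_k^{j,m}$ and then collecting terms. Thus $J_{(0,\omega^k)}=J_0^{0,3}+\omega^k J_0^{1,3}+\omega^{2k}J_0^{2,3}$ in $(iv)$, and $J_{(0,i^k)}=J_0^{0,4}+i^kJ_0^{1,4}+i^{2k}J_0^{2,4}+i^{3k}J_0^{3,4}$ in $(iii)$, with $i^{2k}=-1$ for odd $k$.

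To evaluate a single $J_k^{j,m}(n)$ I would split on $s:=\gcd(j,m)$. When $s>1$ (in particular $j=0$), the reduction formula recorded just after Lemma \ref{char} rewrites $J_k^{j,m}(n)$ as $s^kJ_k^{j/s,\,m/s}(n/s)$ (and $0$ if $s\nmid n$); for the prime-power moduli here this collapses to a multiple of an ordinary $J_k$-value, so that $J_0^{0,m}$ and $J_0^{2,4}$ are supported on a couple of small arguments only, while $J_1^{0,2}(n)=2\phi(n/2)$. When $\gcd(j,m)=1$ I would first strip the bad prime $q\mid m$ ($q=2$ for $m\in\{2,4\}$, $q=3$ for $m=3$) exactly as in the derivation of \eqref{equa1}: every divisor $d\equiv j\pmod m$ is coprime to $q$, so writing $n=q^am_1$ with $\gcd(q,m_1)=1$ gives $J_k^{j,m}(n)=\mu(q^a)\,J_k^{j,m}(m_1)$, reducing to the coprime case $\gcd(n,m)=1$. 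On that part I would pass to Jordan--Dirichlet functions via the orthogonality relation \eqref{eq:Jkjm} and evaluate them through the closed form \eqref{Jord-Dir-formula} of Lemma \ref{char}$(ii)$: the factor $1-1/(\chi(p)p^k)$ vanishes exactly when $\chi(p)=1$ and equals $2$ when $\chi(p)=-1$, which produces the powers $2^{\omega(m_1)}$ and the vanishing whenever a prime $\equiv1\pmod m$ divides $m_1$.

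With these evaluations in hand each part becomes an assembly. In $(i)$ and $(ii)$ only the principal character modulo $2$ intervenes, so one simply combines $J_k^{0,2}$ and $J_k^{1,2}$; here the quickest route is the Dirichlet-series identity $\sum_{d\ge1}(-1)^dd^{k-s}=(2^{k+1-s}-1)\zeta(s-k)$, which upon division by $\zeta(s)$ yields at once $J_{(0,-1)}=-[n=1]+2[n=2]$ and $J_{(1,-1)}=-\phi+4\,\phi(\cdot/2)[2\mid\cdot]$, whence the three cases of $(ii)$ follow from $\phi(2^am)=2^{a-1}\phi(m)$. For $(iv)$ I would reuse Lemma \ref{lem:1} for $J_0^{1,3}$, compute $J_0^{2,3}$ in the identical fashion (it differs only by the sign of the character term, since $[d\equiv2]=\tfrac12(1-\chi(d))$ for $d$ coprime to $3$), note that $J_0^{0,3}$ is supported on $n=3$, and substitute; the generic value then collapses to $(\omega^k-\omega^{2k})(-1)^{\Omega(n)}2^{\omega(m_1)-1}$. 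For $(iii)$ the same scheme with the two characters modulo $4$, together with $i^k-i^{3k}=2i^k$ for odd $k$, yields the generic factor $i^k(-1)^{\Omega(n)}2^{\omega(m)}$.

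I expect the sole real difficulty to be organizational rather than conceptual. The delicate point is to separate cleanly the genuinely exceptional small arguments ($n\in\{1,2,4\}$ in $(iii)$, $n\in\{1,3\}$ in $(iv)$), where the small-support pieces $J_0^{0,m}$ and $J_0^{2,4}$ are active, from the generic r\'egime where only the coprime pieces survive; and to reconcile the prime-power bookkeeping carried by $\mu(q^a)$ with the parity sign, using that $\mu(q^a)(-1)^{\Omega(m_1)}=(-1)^{\Omega(n)}$ precisely on the range $a\in\{0,1\}$ on which the generic formula is nonzero (for $a\ge2$ one has $\mu(q^a)=0$, matching the vanishing recorded in the $a\ge3$ and $a=2,\,m>1$ clauses, where the small-support pieces also vanish since then $n\notin\{2,4\}$, respectively $n\neq3$).
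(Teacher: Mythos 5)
Your plan is sound and, after checking the assembly in each part, it does reproduce all the stated values, including the delicate small arguments and the interaction of $\mu(q^a)$ with $(-1)^{\Omega(n)}$. The route is the same as the paper's in spirit but more systematic in execution. For $(iv)$ the two arguments essentially coincide: the paper also writes $J_{(0,\omega^k)}=S_0+\omega^kS_1+\omega^{2k}S_2$ with $S_j=J_0^{j,3}$, invokes Lemma~\ref{lem:1} for $S_1$, and gets $S_2$ from $S_1+S_2=J_0(\chi_0;\cdot)$ rather than from the character directly. For $(iii)$ the paper does not pass through $J_0^{j,4}$ and \eqref{eq:Jkjm}; instead it introduces the totally multiplicative function $f_k(d)=i^{k(d-1)}$ on odd $d$, verifies its multiplicativity by hand, and computes $(f_k*\mu)(p^{\lambda})$ on prime powers before splitting the divisor sum into odd and even parts --- but since $i^{k(d-1)}=\chi_{-4}(d)$ for odd $k$ and odd $d$, that is your character computation in disguise, and your version has the advantage of reusing Lemma~\ref{char}$(ii)$ instead of redoing the Euler-product calculation. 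For $(i)$ and $(ii)$ the paper argues bare-handed, splitting $\sum_{d\mid n}(\pm1)^d d^k\mu(n/d)$ into odd and even divisors and using $\sum_{d\mid n}d^k\mu(n/d)=J_k(n)$ to recover the even part; your Dirichlet-series identity $\sum_{d\ge1}(-1)^d d^{k-s}=(2^{k+1-s}-1)\zeta(s-k)$ divided by $\zeta(s)$ is a genuinely different and quicker derivation of the same two closed forms $J_{(0,-1)}=-[n=1]+2[n=2]$ and $J_{(1,-1)}=-\phi+4\phi(\cdot/2)[2\mid\cdot]$ (one should just say a word about why identifying coefficients of the two Dirichlet series is legitimate, i.e.\ uniqueness of Dirichlet-series coefficients). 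What your uniform scheme buys is a single template --- decompose via \eqref{root-modulo}, reduce $\gcd(j,m)>1$ by the post-Lemma~\ref{char} formula, strip the ramified prime, apply \eqref{eq:Jkjm} and \eqref{Jord-Dir-formula} --- that would extend mechanically to larger $k$; what the paper's ad hoc treatment buys is self-containedness for the three specific moduli at hand.
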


\begin{proof}
$(i)$ Write $n=2^am$ with  $m$ odd. For $n=1$ and $n=2$ one makes a direct computation. For $n\ge 3$, note that
\begin{equation}
\label{eq:1}
\sum_{\substack{d\mid n\\ d~{\text{\rm odd}}}}\mu(n/d)=\sum_{d\mid m}\mu(2^a (m/d))=\mu(2^a)\sum_{d\mid m} \mu(m/d).
\end{equation}
If $m=1$, then $a\geq 2$ and thus $\mu(2^a)=0$ so that, by \eqref{eq:1}, we get
$
\displaystyle{\sum_{\substack{d\mid n\\ d~{\text{\rm odd}}}}\mu(n/d)=0.}
$
If $m>1$, then $m\geq 3$ so that
$\displaystyle{\sum_{d\mid m} \mu(m/d)=0}
$
and, by \eqref{eq:1}, we again get
$
\sum_{\substack{d\mid n\\ d~{\text{\rm odd}}}}\mu(n/d)=0.
$
It follows that
$$
0=\sum_{d\mid n} \mu(n/d)=\sum_{\substack{d\mid n\\ d~{\text{\rm even}}}}\mu(n/d)+\sum_{\substack{d\mid n\\ d~{\text{\rm  odd}}}} \mu(n/d)=\sum_{\substack{d\mid n\\ d~{\text{\rm even}}}}\mu(n/d).
$$
Hence,
$$
J_{(0,-1)}(n)=\sum_{d\mid n} (-1)^d\mu(n/d)=-\sum_{\substack{d\mid n\\ d~{\text{\rm odd}}}}\mu(n/d)+\sum_{\substack{d\mid n\\ d~{\text{\rm even}}}}\mu(n/d)=0.
$$
$(ii)$ We start again with the odd $d$'s getting
\begin{equation}
\label{eq:2}
\sum_{\substack{d\mid n\\ d~{\text{\rm odd}}}} d\mu(n/d)=\sum_{d\mid m}d\mu(2^a (m/d))=\mu(2^a)\sum_{d\mid m}d\mu(m/d)=\mu(2^a)\phi(m).
\end{equation}
The above calculation proves $(ii)$ if $a=0$. If $a\ge 2$, the right--hand side above is zero. Hence, we have
$$
\phi(n)=\sum_{d\mid n} d\mu(n/d)=\sum_{\substack{d\mid n\\ d~{\text{\rm even}}}} d\mu(n/d)+\sum_{\substack{d\mid n\\ d~{\text{\rm odd}}}} d\mu(n/d)=\sum_{\substack{d\mid n\\ d~{\text{\rm even}}}} d\mu(n/d),
$$
 so that we also have
$$
J_{(1,-1)}(n)=\sum_{d\mid n} (-1)^d d\mu(n/d)=\sum_{\substack{d\mid n\\ d~{\text{\rm even}}}} d\mu(n/d)-\sum_{\substack{d\mid n\\ d~{\text{\rm  odd}}}}d \mu(n/d)=\phi(n).
$$
Finally, if $a=1$, we have $n=2m$ and then every even divisor of $2m$ is of the form $2d$ for $d\mid m$. Thus,
\begin{eqnarray*}
J_{(1,-1)}(n) & = & \sum_{\substack{d\mid 2m\\ d~{\text{\rm even}}}}d\mu(2m/d)-\sum_{\substack{d\mid 2m\\ d~{\text{\rm odd}}}} d\mu(2m/d)\\
&=& \sum_{d\mid m}  (2d)\mu(2m/2d)+\sum_{d\mid m} d\mu(m/d)\\
& = & 2\sum_{d\mid m}d\mu(m/d)+\phi(m)=3\phi(m)= 3\phi(n).\\
\end{eqnarray*}

$(iii)$ The function $f_k(n)=i^{k(n-1)}$ defined for odd $n$ and extended to all positive integers by putting $f_k(n)=0$ for even $n$, is totally multiplicative. Indeed, if $m,n$ are both odd, we then then have
$f_k(mn)=i^{k(mn-1)}$ and $f_k(m)f_k(n)=i^{k(m-1)}i^{k(n-1)}=i^{k(m+n-2)}$ and then the equality
$$f_k(mn)=f_k(m)f_k(n)$$ is equivalent to
$$
i^{k(mn-1)}=i^{k(m+n-2)},
$$
which is equivalent to
$$
1=i^{k(mn-m-n+1)}=i^{k(m-1)(n-1)},
$$
which holds since both $m-1$ and $n-1$ are even. If instead at least one of $m$ and $n$ is even, then $mn$ is even, so that $f_k(mn)=0=f_k(m)f_k(n).$

So,
\begin{equation}
\label{eq:3}
\sum_{\substack{d\mid n\\ d~{\text{\rm odd}}}} i^{kd}\mu(n/d)=i^k\mu(2^a)\sum_{d\mid m} i^{k(d-1)}\mu(m/d)=i^k\mu(2^a)(f_k*\mu)(m),
\end{equation}
and $f_k*\mu$ is multiplicative. If $m=p^{\lambda}$, with $p$ an odd prime and $\lambda \geq 1$,
 then
\begin{eqnarray*}
(f_k*\mu)(p^{\lambda}) & = & \sum_{d\mid p^{\lambda}} i^{k(d-1)}\mu(p^{\lambda}/d)=-i^{k(p^{\lambda-1}-1)}+i^{k(p^{\lambda}-1)}\\
& = & \left\{\begin{matrix} 0 & {\text{\rm if}} & p\equiv 1\pmod 4;\\
  1+(-1)^{k+1} & {\text{\rm if}} & p\equiv 3\pmod 4,~2\mid \lambda;\\
 (-1)( 1+(-1)^{k+1}) & {\text{\rm if}} & p\equiv 3\pmod 4,~2\, {\nmid }\, \lambda.
 \end{matrix}\right.
 \end{eqnarray*}

So, if $k=1,3$, we get that $(f_k*\mu)(p^{\lambda})$ equals $0$ when $p\equiv 1\pmod 4$ and equals $2(-1)^{\lambda}$ if $p\equiv 3\pmod 4$. We thus get that for $m>1$,
\begin{equation}\label{star}
(f_k*\mu)(m)=\left\{\begin{matrix}
0 & {\text{\rm if}} & p\equiv 1\pmod 4~{\text{\rm for~some}} ~p\mid m;\\
(-1)^{\Omega(m)} 2^{\omega(m)} & {\text{\rm if}} & p\equiv 3\pmod 4~{\text{\rm for ~all}}~p\mid m.
\end{matrix}\right.
\end{equation}
If $a=0$, then $n=m$ is odd and thus, by \eqref{eq:3}, we get
$$
J_{(0,i^k)}(n)=\sum_{d\mid n} i^{kd}\mu(n/d)=i^k(f_k*\mu)(m),
$$
and this is $i^k$ if $n=m=1$, $0$ if $m>1$ and $p\mid m$ for some prime number $p\equiv 1\pmod 4$ and $i^k(-1)^{\Omega(m)}2^{\omega(m)}=i^k (-1)^{\Omega(n)}2^{\omega(m)}$, otherwise.

 If $a\ge 2$, then by \eqref{eq:3}, the sum over the divisors $d$ of $n$ which are odd is zero since $\mu(2^a)=0$. Thus, the given sum is concentrated on the even divisors and we have
\begin{eqnarray*}
J_{(0,i^k)}(n)&=&\sum_{2d\mid n} i^{k(2d)}\mu(n/2d)=\sum_{d\mid n/2} (-1)^{kd} \mu((n/2)/d)\\
&=&\sum_{d\mid n/2} (-1)^d\mu((n/2)/d)
\end{eqnarray*}
for $k=1,3$. Moreover, by $(i)$ and the fact that $n/2\ge 2$, this last sum is zero unless $n/2=2$ in which case it is $2$.

Let finally $a=1$, so that $n=2m.$ We compute that the given sum is $-1-i^k$ for
$n=2$. Now assume $m>1$. In this case, by \eqref{eq:3}, we have
$$
\sum_{\substack{d\mid n\\ d~{\text{\rm odd}}}} i^{kd}\mu(n/d)=-i^k (f_k*\mu)(m)
$$
and, by \eqref{star}, this is zero unless all prime factors of $m$ are congruent to $3$ modulo $4$ in which case it is $-i^k(-1)^{\Omega(m)}2^{\omega(m)}=i^k (-1)^{\Omega(n)}2^{\omega(m)}$.
As for the even divisors, these are of the form $2d$ for some $d\mid m$, and we get
$$
\sum_{\substack{d\mid n\\ d~{\text{\rm even}}}} i^{kd}\mu(n/d)=\sum_{d\mid m} i^{k(2d)}\mu(2m/2d)=\sum_{d\mid m} (-1)^d\mu(m/d),
$$
and, by $(i)$, this last sum is $0$ since $m\geq 3$.

$(iv)$ We have
\begin{eqnarray}\label{terms}
\nonumber J_{(0,\omega^k)}(n)&=&\sum_{d\mid n} \omega^{dk}\mu(n/d) \\
& = &   \omega^k\sum_{\substack{d\mid n\\ d\equiv 1\pmod 3}} \mu(n/d)+\omega^{2k}\sum_{\substack{d\mid n\\ d\equiv 2\pmod 3}} \mu(n/d)+\sum_{\substack{d\mid n\\ 3\mid d}} \mu(n/d)\nonumber\\
& = & \omega^kS_1(n)+\omega^{2k}S_2(n)+S_0(n),
\end{eqnarray}
where, for shortness, we have set $S_j(n):=J_{0}^{j,3}(n),$  for $j\in \{0,1,2\}.$
Thus, we need to compute $S_j(n)$, for $j\in \{0,1,2\}.$

The easiest one is $S_0$. If $3\nmid n$, we obviously have that $S_0(n)=0$. If $3\mid n$, that is $b\geq 1$, we instead have, by \eqref{Jord0}:
$$
S_0(n)=\sum_{\substack{d\mid n\\ 3\mid d}} \mu(n/d)=\sum_{d\mid n/3} \mu((n/3)/d)=\left\{\begin{matrix} 1 & {\text{\rm if}} & n=3;\\ 0 &{\text{\rm if}} & n>3.\end{matrix}\right.
$$
So, $S_0(n)$ is always $0$ except if $n=3$ when it is $1$. As for $S_1,~S_2$, we write
$$
S_1(n)=\sum_{\substack{d\mid n\\ d\equiv 1\pmod 3}} \mu(n/d)=\mu(3^b)\sum_{\substack{d\mid m_1\\ d\equiv 1\pmod 3}} \mu(m_1/d)=\mu(3^b)S_1(m_1),
$$
and similarly $S_2(n)=\mu(3^b)S_2(m_1)$. By \eqref{31}, 
we have
$$
S_1(m_1)=\left\{\begin{matrix} 1 & {\text{\rm if}} & m_1=1;\\
 0 & {\text{\rm if}} & \exists~p\equiv 1\pmod 3,~p\mid m_1;\\
 (-1)^{\Omega(m_1)} 2^{\omega(m_1)-1} & {\text{\rm }} & {\text{\rm otherwise}}.
 \end{matrix}\right.
 $$
 Since
 $$
 S_1(m_1)+S_2(m_1)=\sum_{d\mid m_1} \mu(m_1/d)
 $$
 is $1$ for $m_1=1$ and $0$ for $m_1>1$, we get that
 $$
 S_2(m_1)=\left\{\begin{matrix} 0 & {\text{\rm if}} & m_1=1;\\
 0 & {\text{\rm if}} & \exists ~p\equiv 1\pmod 3,~p\mid m_1;\\
 -(-1)^{\Omega(m_1)} 2^{\omega(m_1)-1} & {\text{\rm }} & {\text{\rm otherwise}}.
 \end{matrix}\right.
 $$
Thus, by \eqref{terms}, get that
 $$
J_{(0,\omega^k)}(n)=\left\{\begin{matrix} \omega^k & {\text{\rm if}} & n=1;\\
 -\omega^k+1 & {\text{\rm if}} & n=3;\\
 0 & {\text{\rm if}} & \exists~ p\equiv 1\pmod 3,~p\mid m_1;\\
 0 & {\text{\rm if}} & b\ge 2;\\
  (\omega^k-\omega^{2k}) (-1)^{\Omega(n)}2^{\omega(m_1)-1} & {\text{\rm }} & {\text{\rm otherwise}}.\\
 \end{matrix}\right.
 $$
\end{proof}

\section{Partitions and coprime partitions into $k$ parts for $k\in \{2,3,4\}$}\label{small}

\subsection{The case of $2$ parts}\label{small2}
By \eqref{gen-part}, we have
$$
\sum_{n\ge 1} p_2(n)z^n=\frac{z^2}{(1-z)(1-z^2)}.
$$
Partial fraction expansion gives
\begin{eqnarray*}
\frac{z^2}{(1-z)(1-z^2)} & = & \frac{z^2}{(1-z)^2(1+z)}= \frac{-3}{4(1-z)}+ \frac{1}{2(1-z)^2}+ \frac{1}{4(1+z)}.\\
\end{eqnarray*}
Hence, using formula \eqref{lemma3}, we get
\begin{eqnarray*}
\sum_{n\ge 1} p_2(n)z^n&=&\frac{-3}{4}\sum_{n\geq 0}z^n+\frac{1}{2}\sum_{n\geq 0}(n+1)z^n+\frac{1}{4}\sum_{n\geq 0}(-1)^nz^n\\
&=& \sum_{n\geq 0}\left(\frac{2n-1}{4}+\frac{(-1)^n}{4}\right)z^n
\end{eqnarray*}
and thus
\begin{equation}\label{p2}
p_2(n)=\frac{2n-1}{4}+\frac{(-1)^n}{4}.
\end{equation}
This is, of course, a reedition of the obvious $p_2(n)=\lfloor\frac{n}{2}\rfloor$, which puts in evidence the nature of $p_2(n)$ as a sum of a polynomial and of a periodic function of period $2.$
By \eqref{Mob-part}, we then get for every $n\geq 1$
\begin{eqnarray}\label{p'2}
\nonumber p'_2(n)&=&\frac{1}{2}\sum_{d\mid n}d\mu(n/d)-\frac{1}{4}\sum_{d\mid n}\mu(n/d)+\frac{1}{4}\sum_{d\mid n}(-1)^d\mu(n/d)\\
&=&\frac{1}{2}J_1(n)-\frac{1}{4}J_0(n)+\frac{1}{4}J_{(0,-1)}(n).
\end{eqnarray}
By Lemma \ref{comp}\,$(i)$,
$$\frac{1}{4}J_{(0,-1)}(n)=\left\{\begin{matrix} -1/4 & {\text{\rm if}} & n=1;\\
1/2 & {\text{\rm if}} & n=2;\\
0 & {\text{\rm  if}} & n>2.\end{matrix}\right.$$
Note that if $n\geq 3$, then both $J_0(n)$ and $J_{(0,-1)}(n)$ vanish in \eqref{p'2} so that $p_2'(n)=\frac{J_1(n)}{2},$ which gives \eqref{k2N}.

\subsection{The case of $3$ parts}\label{small3}
By \eqref{gen-part} and partial fraction expansion we have
\begin{eqnarray*}
\sum_{n\ge 1} p_3(n)z^n&=& \frac{z^3}{(1-z)(1-z^2)(1-z^3)}  =  \frac{z^3}{(1-z)^3(1+z)(1+z+z^2)}\\
& = &  -\frac{1}{72(1-z)}- \frac{1}{4(1-z)^2} + \frac{1}{6(1-z)^3}
 -\frac{1}{8(1+z)}\\
 &+& \frac{1}{9(1-\omega z)}+ \frac{1}{9(1-\overline{\omega} z)},
\end{eqnarray*}
where $\omega=\frac{-1+i\sqrt{3}}{2}$ is the principal $3$-root of $1$. 
Using repeatedly formula \eqref{lemma3}, after elementary simplifications we get
\begin{eqnarray*}
\sum_{n\ge 1} p_3(n)z^n&=& \sum_{n\geq 0}\left(\frac{n^2}{12}-\frac{7}{72}-\frac{(-1)^n}{8}+\frac{\omega^n+\overline{\omega}^n}{9}\right)z^n.
\end{eqnarray*}
Thus, for every $n\geq 1$, we have
\begin{equation}\label{p3}
p_3(n)=\frac{n^2}{12}-\frac{7}{72}-\frac{(-1)^n}{8}+\frac{\omega^n+\overline{\omega}^n}{9}.
\end{equation}
The above equality puts in evidence the nature of $p_3(n)$ as a sum of a polynomial and of a periodic function of period $6$ and give \eqref{part-dec}  for $k=3.$
By  \eqref{Mob-part} we then get, for every $n\geq 1$,
\begin{equation}\label{p'3}
p_3'(n)=\frac{1}{12}J_2(n)-\frac{7}{72}J_0(n) -\frac{1}{8}J_{(0,-1)}(n)+\frac{1}{9}J_{(0,\omega)}(n) +\frac{1}{9}J_{(0,\overline{\omega})}(n).
\end{equation}
Thus, we see the way in which  $p_3'(n)$ is a $\C$-linear combination of Jordan root totient functions. By Lemma \ref{comp}\,$(i)$, we have
$$\frac{-1}{8}J_{(0,-1)}(n)=\left\{\begin{matrix} 1/8 & {\text{\rm if}} & n=1;\\
-1/4 & {\text{\rm if}} & n=2;\\
0 & {\text{\rm  if}} & n\geq 3.
\end{matrix}\right.$$

Moreover, by Lemma \ref{comp}\,$(iv)$,  we have
$$\frac{1}{9}J_{(0,\omega)}(n) +\frac{1}{9}J_{(0,\overline{\omega})}(n)=\left\{\begin{matrix} -1/9 & {\text{\rm if}} & n=1;\\
0 & {\text{\rm if}} & n=2;\\
1/3 & {\text{\rm if}} & n=3;\\
0 & {\text{\rm  if}} & n\geq 4.\end{matrix}\right.$$

In particular, for $n\geq 4$, all the terms in \eqref{p'3} except the first one vanish and we get $p_3'(n)=\frac{J_2(n)}{12}$,  which confirms \eqref{k3N}.

\subsection{The case of $4$ parts}\label{small4}
By \eqref{gen-part} and partial fraction expansion we have
\begin{eqnarray*}
\sum_{n\ge 1} p_4(n)z^n&=& \frac{z^4}{(1-z)(1-z^2)(1-z^3)(1-z^4)}  \\
& = &  \frac{z^4}{(1-z)^4(1+z)^2(1+z^2)(1+z+z^2)}\\
& = & \frac{-13}{288(1-z)^2}-\frac{1}{24(1-z)^3}+\frac{1}{24(1-z)^4}+\frac{1}{32(1+z)^2} \\
& +&  \frac{1}{16(1-iz)}
+ \frac{1}{16(1+iz)} -\frac{1}{9(\omega-{\overline{\omega}})}\left(\frac{\omega}{1-\omega z}-\frac{{\overline{\omega}}}{1-{\overline{\omega}}z}\right),
\end{eqnarray*}
where $\omega=\frac{-1+i\sqrt{3}}{2}$. Now, by formula \eqref{lemma3}, we have
$$
\frac{1}{(1+z)^2}  =\sum_{n\ge 0} (-1)^{n}(n+1)
z^n,\qquad \frac{1}{(1-z)^2}=\sum_{n\ge 0} (n+1)z^n,
$$
$$
\frac{1}{(1-z)^3} =\sum_{n\ge 0} \binom{n+2}{2} z^n,\qquad \frac{1}{(1-z)^4}=\sum_{n\ge 0} \binom{n+3}{3} z^n.
$$
Hence, we get
\begin{eqnarray*}
p_4(n) & = & \frac{1}{24} \binom{n+3}{3}-\frac{1}{24}\binom{n+2}{2}-\frac{13}{288} (n+1)+\frac{(-1)^{n}(n+1)}{32}\\
& + &\frac{i^n+(-i)^n}{16}-\frac{\omega^{n+1}-{\overline{\omega}}^{n+1}}{i9\sqrt{3}}.
\end{eqnarray*}

Simplifying we obtain the expression of $p_4(n)$, for $n\geq 1:$
\begin{eqnarray}\label{p4}
p_4(n) & = & \frac{n^3}{144}+\frac{n^2}{48}-\frac{n}{32}-\frac{13}{288}+\frac{(-1)^{n}(n+1)}{32}\\
& + & \frac{i^n+(-i)^n}{16}-\frac{\omega^{n+1}-{\overline{\omega}}^{n+1}}{i9\sqrt{3}}.\nonumber
\end{eqnarray}
The above equality is \eqref{part-dec} for $k=4$ and exhibits $p_4(n)$ as the sum of a polynomial, a periodic function of period $12$ and the further term $$\frac{(-1)^{n}(n+1)}{32}$$ which is neither of polynomial type nor periodic. Writing \eqref{p4} as
\begin{eqnarray}\label{p4bis}
p_4(n) & = & \frac{n^3}{144}+\frac{n^2}{48}+\left(\frac{-1+(-1)^n}{32}\right)n-\frac{13}{288}+\frac{(-1)^{n}}{32}\\
& + & \frac{i^n+(-i)^n}{16}-\frac{\omega^{n+1}-{\overline{\omega}}^{n+1}}{i9\sqrt{3}},\nonumber
\end{eqnarray}
we instead see the explicit expression of $p_4(n)$ as a quasi--polynomial  split into its polynomial part of degree $3$ and a quasi--polynomial of degree $1$ and quasi--period $12$ as expected by Example \ref{ex}\,$(iii)$.
The expression of $p'_4(n)$, for every $n\geq 1$, follows as usual by \eqref{Mob-part} and \eqref{p4}:
\begin{eqnarray}\label{p'4}
p_4'(n)&=&\frac{J_3(n)}{144}+\frac{J_2(n)}{48}-\frac{J_1(n)}{32}-\frac{13J_0(n)}{288}+\frac{1}{32}J_{(1,-1)}(n)\\ \nonumber
&+&\frac{1}{32}J_{(0,-1)}(n)+\frac{1}{16}J_{(0,i)}(n)+\frac{1}{16}J_{(0,i^3)}(n)\\ \nonumber
&-&\frac{i\sqrt{3}+3}{54}J_{(0,\omega)}(n)+\frac{i\sqrt{3}-3}{54}J_{(0,\overline{\omega})}(n).\nonumber
\end{eqnarray}
It shows $p'_4(n)$ as a $\C$-linear combination of the Jordan root totient functions.

From the computations made in Section \ref{sum-mu}, writing $n=3^bm_1$ with $\gcd(m_1,3)=1$, we see that
$$
\frac{1}{32}J_{(1,-1)}(n)+\frac{1}{32}J_{(0,-1)}(n)=\frac{1}{32} \left\{\begin{matrix} -2 & {\text{\rm if}} & n=1;\\
5 & {\text{\rm if}} & n=2;\\
-\phi(n) & {\text{\rm if}} & n\equiv 1\pmod 2,~n>1;\\
3\phi(n)& {\text{\rm if}} & 2\| n,~n>2;\\
\phi(n) & {\text{\rm if}} & 4\mid n,\\
\end{matrix}
\right.
$$
$$
\frac{1}{16}J_{(0,i)}(n)+\frac{1}{16}J_{(0,i^3)}(n)=\frac{1}{16}\left\{\begin{matrix} 0 & {\text{\rm if}} & n=1;\\
-2 & {\text{\rm if}} & n=2;\\
4 & {\text{\rm if}} & n=4;\\
0 & {\text{\rm }} & {\text{\rm otherwise}},\end{matrix}\right.
$$
and
$$
-\frac{i\sqrt{3}+3}{54}J_{(0,\omega)}(n)+\frac{i\sqrt{3}-3}{54}J_{(0,\overline{\omega})}(n)=\frac{1}{9}
\left\{\begin{matrix} 1 & n=1;\\
-2  & n=3;\\
0 & \begin{matrix}\exists~p\equiv1 \pmod 3\\ p\mid m_1;\end{matrix}\\
0 & b\ge 2;\\
(-1)^{\Omega(n)}2^{\omega(m_1)-1} & {\text{\rm otherwise}}.\\
\end{matrix}
\right.
$$

\vspace{7mm}

\noindent {{\bf Acknowledgments}} The first author is partially supported by the research group GNSAGA of INdAM (Italy). She thanks Francesco Fumagalli for several useful conversations on the topic.
Work by the second author started during a visit at the University of Florence (Italy) in May 2015. He thanks this Institution for hospitality and INdAM  for support.
Both authors thank an anonymous referee for the advice  and many helpful comments and remarks which greatly improved content and readability of the paper.
\vspace{7mm}

\end{document}